\numberwithin{equation}{section}
\numberwithin{figure}{section}
\newtheorem{theorem}{Theorem}
\newtheorem{corollary}[theorem]{Corollary}
\newtheorem{lemma}[theorem]{Lemma}
\newtheorem{problem}[theorem]{Problem}
\newtheorem{question}[theorem]{Question}
\newtheorem{remark}[theorem]{Remark}
\newtheorem{claim}[theorem]{Claim}
\newtheorem{definition}[theorem]{Definition}
\newcommand{\calF}{\mathcal{F}}
\title{Biased domination games}
\author{Ali Deniz Bagdas, Dennis Clemens, Fabian Hamann, Yannick Mogge}
\thanks{The research of the second and fourth author is supported 
by Deutsche Forschungsgemeinschaft (Project CL 903/1-1). }
\begin{document}

\maketitle

\begin{abstract}
We consider a biased version of Maker-Breaker domination games, which were recently introduced by Gledel, Ir{\v{s}}i{\v{c}}, and Klav{\v{z}}ar.
Two players, Dominator and Staller,
alternatingly claim vertices of a graph $G$
where Dominator is allowed to claim up to $b$
vertices in every round and she wins if and only if she occupies all vertices of a dominating set of $G$.
For this game, we prove a full characterization
of all trees on which Dominator has a winning strategy. For the number of rounds which Dominator needs to win, we give exact results when played on powers of paths or cycles, 
and for all trees we provide bounds which are optimal up to a constant factor not depending on $b$.
Furthermore, we discuss general minimum degree conditions and study how many vertices can still be dominated by Dominator even when Staller has a winning strategy.
\end{abstract}

\section{Introduction}

A dominating set $D$ in a graph $G=(V,E)$ is a set of vertices
such that each vertex in $V\setminus D$ has a neighbour in $D$.
The \textit{domination number} $\gamma(G)$ of a graph $G$, i.e.~the smallest size of a dominating set of $G$, belongs to the most prominent graph parameters in graph theory. While the decision problem whether $\gamma(G)\leq k$ holds is a classical example of an NP-complete decision problem~\cite{Karp72}, research in the last decades has brought many interesting results, including e.g.~concentration results for random graphs~\cite{bonato2015domination,glebov2015concentration,wieland2001domination}.

Note that a greedy algorithm, such as taking large-degree vertices one at a time, can lead to dominating sets that are very far apart from the optimum. Nevertheless, to model such a vertex-by-vertex approach, Bre{\v{s}}ar, Klav{\v{z}}ar, and Rall~\cite{brevsar2010domination}
introduced the \textit{domination game} played on a graph $G$ as follows. Two players, called Dominator and Staller, alternatingly choose vertices of $G$ with the restriction that any new vertex must enlarge the set of already dominated vertices, and the game ends once all vertices are dominated by the chosen vertices.
Dominator's goal is to minimize the number of moves,
while Staller's goal is to maximize this number.
The \textit{game domination number} then is the smallest number of moves until the game ends, under optimal play of both players,
and it is denoted with $\gamma_g(G)$ or $\gamma_g'(G)$,
depending on whether Dominator or Staller is the first player. 

Since the seminal paper~\cite{brevsar2010domination},
the above combinatorial game has been studied extensively,
including challenging open problems such as the 3/5-conjecture
for graphs without isolated vertices~\cite{kinnersley2013extremal} and the 1/2-conjecture for traceable graphs~\cite{james2019domination}.
Nevertheless, and partly due to the non-monotone behaviour of these games~\cite{brevsar2014domination,brevsar2013domination},
only few precise results are known for fixed graphs $G$, including
paths and cycles~\cite{kovsmrlj2017domination}, powers of cycles~\cite{bujtas2015domination}, and caterpillars~\cite{brevsar2013domination}.
Unfortunately, even for such simple graph classes, the study of the domination game tends to be complex;
and already for the class of trees no precise results are known for the parameters $\gamma_g(G)$ and $\gamma_g'(G)$ in general. For an overview on game domination numbers and variants we recommend the book~\cite{brevsar2021domination}.

\smallskip

Very recently, Gledel, Ir{\v{s}}i{\v{c}}, and Klav{\v{z}}ar~\cite{gledel2019maker} introduced the following Maker-Breaker variant of the above game.
Two players, again called Dominator and Staller,
alternatingly claim vertices of the given graph $G$.
Dominator wins if she manages to claim all the vertices of a dominating set of $G$, and Staller wins otherwise.
One main question then is to characterize all graphs for which Dominator has a winning strategy.
Provided that Dominator can win,
the other main question then is how fast she can do so. 
For this, let the \textit{Maker-Breaker domination number} 
$\gamma_{MB}(G)$ be the smallest number of rounds within which Dominator can always win the Maker-Breaker domination game on $G$, provided that she is the first player and both players play optimally, and
set $\gamma_{MB}(G)=\infty$ if Staller has a winning strategy. Moreover, define $\gamma'_{MB}(G)$ analogously for the case when Staller is the first player.

Although the rules of the above two types of games seem very similar, they can behave very differently.
For instance, it is known that $|\gamma_g(G)-\gamma'_g(G)|\leq 1$
for every graph $G$~\cite{brevsar2010domination,kinnersley2013extremal},
while $\gamma_{MB}(G)$ and $\gamma'_{MB}(G)$ can be arbitrarily
far apart~\cite{gledel2019maker}.

\begin{theorem}[Theorem~3.1~in~\cite{gledel2019maker}]
\label{thm:construction.Gledel}
For any integers $2\leq r\leq s\leq t$, there exists a graph $G$ such that $\gamma(G)=r$, $\gamma_{MB}(G)=s$
and $\gamma_{MB}'(G)=t$.
\end{theorem}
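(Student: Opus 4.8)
The plan is to realise the prescribed triple by a single connected \emph{spider gadget}. Fix a vertex $\ell$, attach $r$ \emph{centres} $x_1,\dots,x_r$, each adjacent to $\ell$, and hang from each centre $x_i$ a bundle of $\alpha_i\ge 1$ pairwise vertex-disjoint triangles (i.e.\ $\alpha_i$ pairs $\{u,v\}$ with $u\sim v$, $u\sim x_i$, $v\sim x_i$); call the result $G=G(\alpha_1,\dots,\alpha_r)$. A triangle hanging at $x_i$ can only be dominated from $\{x_i\}$ or from one of its own two vertices, so any dominating set omitting $x_i$ must spend at least $\alpha_i\ge 1$ vertices on that bundle; hence $\{x_1,\dots,x_r\}$ is optimal and $\gamma(G)=r$. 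The structural heart of the analysis is that only moves on the centres matter: Dominator claiming $x_i$ clears the whole bundle in one move, while a triangle vertex clears one triangle; and Staller claiming $x_i$ turns each of the $\alpha_i$ triangles into an independent $K_2$-threat that Dominator must answer one move at a time — meanwhile the shared leaf $\ell$ is never urgent (it has $r\ge 2$ neighbours and becomes dominated as soon as Dominator owns any centre). Thus optimal play is a \emph{race for the centres} in decreasing order of bundle size. Sorting $\alpha_1\ge\cdots\ge\alpha_r$, when Dominator starts she pre-empts the centres of odd rank and Staller keeps those of even rank, so Dominator needs $\lceil r/2\rceil+(\alpha_2+\alpha_4+\cdots)$ moves; when Staller starts the parity is reversed and Dominator needs $\lfloor r/2\rfloor+(\alpha_1+\alpha_3+\cdots)$ moves.

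Granting this, choose $\alpha_1=t-r+1$, $\alpha_2=s-r+1$ and $\alpha_3=\cdots=\alpha_r=1$; these are positive and already non-increasing since $t\ge s\ge r$. Substituting into the two move counts and using $\lceil r/2\rceil+\lfloor r/2\rfloor=r$ yields $\gamma_{MB}(G)=s$ and $\gamma_{MB}'(G)=t$, while $\gamma(G)=r$. The boundary cases need no separate treatment: $r=s$ just forces $\alpha_2=1$, $s=t$ forces $\alpha_1=\alpha_2$, $r=s=t$ forces every $\alpha_i=1$ (a spider of $r$ triangles sharing the leaf $\ell$), and for $r=2$ the gadget collapses to $\ell$ joined to two centres carrying $t-1$ and $s-1$ triangles.

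The part I expect to be the actual work is proving the two move-count formulas by exhibiting matching strategies. For the upper bound, let Dominator always take the unclaimed centre with the largest remaining bundle whenever one exists, and otherwise answer a pending threat — a triangle whose centre Staller owns and whose other two vertices are both free — by claiming one of those two vertices; one checks this never loses (every triangle vertex, and $\ell$, always keeps two chances of being dominated) and that it ends within the claimed round count. For the lower bound, let Staller mirror this: always grab the unclaimed centre with the largest remaining bundle, so that its triangles turn into threats before Dominator can pre-empt them, and once the centres are exhausted keep threatening a fresh triangle so Dominator never gains a free move. The delicate bookkeeping is (i) showing that a move away from the centres is strictly worse for whoever plays it, so that both players genuinely race through the centres in rank order; (ii) verifying that $\ell$ and the triangle vertices never force Dominator into an extra obligatory response that would perturb the count; and (iii) tracking the parity of $r$ and the way the centre-race phase interleaves with the threat-answering phase. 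None of this is deep, but these are exactly the places where an imprecise strategy would move a count by $\pm 1$, so the case analysis has to be done with care.
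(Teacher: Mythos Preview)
This theorem is not proved in the present paper; it is quoted from Gledel, Ir{\v{s}}i{\v{c}}, and Klav{\v{z}}ar as background, with no argument supplied here. There is therefore no in-paper proof to compare your attempt against.

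On its own merits, your construction is sound. The spider gadget has $\gamma(G)=r$ for exactly the reason you give, and the game really does reduce to an alternating race on the centres: a triangle at a Staller-owned centre is a paired threat (once Staller plays one of its two private vertices, Dominator must immediately play the other or lose), so threat-moves come in matched pairs that do not disturb the centre race, and any deviation from the greedy race is weakly dominated for the deviating player. Your move-count formulas $\lceil r/2\rceil+\sum_{i\text{ even}}\alpha_i$ and $\lfloor r/2\rfloor+\sum_{i\text{ odd}}\alpha_i$ follow, and the arithmetic with $\alpha_1=t-r+1$, $\alpha_2=s-r+1$, $\alpha_3=\cdots=\alpha_r=1$ then produces the required triple. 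The issues you flag in (i)--(iii) are precisely where a careful write-up is needed, but none of them conceals an obstruction; in particular, the leaf $\ell$ is harmless because Dominator always secures at least one centre under optimal play.
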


Moreover and amongst others, there exists a full characterization 
of all trees on which Dominators wins the Maker-Breaker domination game~\cite{duchene2020maker,gledel2019maker}, and precise results
on $\gamma_{MB}$ and $\gamma_{MB}'$ were proven
for all trees and cycles~\cite{gledel2019maker}.
To describe their result,
Gledel et al.~define a residual graph $R(G)$ which is obtained from $G$ by iteratively removing pendant paths of length $2$ until 
reaching a graph which does not have any further such pendant paths.

\begin{theorem}[Theorem~4.5~in~\cite{gledel2019maker}]\label{thm:gledel.trees}
If $T$ is a tree, then
$$
\gamma_{MB}(T) =
\begin{cases}
\frac{v(T)}{2}, & ~ \text{if $T$ has a perfect matching}\\
\frac{v(T)-1}{2}, & ~ \text{if $R(T)$ is a single vertex}\\
\frac{v(T)-k+1}{2}, & ~ \text{if $R(T)$ is a star with $k\geq 3$ edges}\\
\infty, & ~ \text{otherwise}
\end{cases}
$$
and
$$
\gamma_{MB}'(T) =
\begin{cases}
\frac{v(T)}{2}, & ~ \text{if $T$ has a perfect matching}\\
\infty, & ~ \text{otherwise.}
\end{cases}
$$
\end{theorem}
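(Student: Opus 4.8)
The plan is to view the game as a Maker--Breaker game on the hypergraph $\mathcal H_T=\{N[v]:v\in V(T)\}$, with Dominator as Breaker (she must touch every $N[v]$) and Staller as Maker (she wins by occupying some $N[v]$ entirely), and then to induct on $v(T)$ by peeling off pendant paths of length $2$, reducing to the residual graph $R(T)$ and computing everything there.

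The first step is a reduction lemma: if $T'$ is obtained from a tree $T$ with $v(T)\ge 2$ by adding a path $v-x-y$ with $v\in V(T)$ and $x,y$ new vertices, then Dominator wins the Dominator-start (respectively Staller-start) game on $T'$ if and only if she wins it on $T$, and $\gamma_{MB}(T')=\gamma_{MB}(T)+1$, $\gamma'_{MB}(T')=\gamma'_{MB}(T)+1$, with the convention $\infty+1=\infty$. For the upper bound Dominator reserves the pair $\{x,y\}$: whenever Staller takes one of $x,y$ she takes the other, which already hits $N[y]=\{x,y\}$ and $N[x]=\{v,x,y\}$, while each remaining hyperedge $N_{T'}[u]$ with $u\in V(T)$ contains $N_T[u]$ (note $N_T[v]\subseteq N_{T'}[v]$); so she plays her optimal $T$-strategy on the side, treating a Staller move into $\{x,y\}$ together with her own reply as a simultaneous pass, which leaves a Maker--Breaker position undisturbed and costs exactly one extra round. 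For the lower bound Staller runs her optimal $T$-strategy and, once it is exhausted or once Dominator herself moves into $\{x,y\}$, plays $x$, forcing Dominator to answer $y$; since $x,y$ lie in no hyperedge of $\mathcal H_T$, this reply cannot double as a move finishing the $T$-game, so it costs one extra Dominator move. One genuine exception must be tracked: for $T=K_1$ the new hyperedge $N_T[v]=\{v\}$ is absorbed into $N_{T'}[v]$, so $\gamma_{MB}(P_3)=1$ and $\gamma'_{MB}(P_3)=\infty$; this creates a single $-1$ offset in any reduction chain ending at $R(T)=K_1$. I would also note here that the peeling process is confluent, so $R(T)$ and $m:=(v(T)-v(R(T)))/2$ are well defined.

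The second step is to classify residual trees and settle the game on each. Since in a residual tree every vertex adjacent to a leaf has degree $\ge 3$ (unless the tree is $K_2$), a short analysis of the tree obtained by deleting all leaves shows that a residual tree is $K_1$, $K_2$, a star $K_{1,k}$ with $k\ge 3$ (note $P_3=K_{1,2}$ is not residual), or else has two distinct vertices $c_1,c_2$ each with at least two leaf-neighbours. In the last case Staller wins, first or second player: on each of her turns she claims whichever of $c_1,c_2$ is still free — one always is, since Dominator's previous move took at most one of them — forcing Dominator to spend a move on a leaf of that vertex, and after at most one such exchange at each $c_i$ Staller completes $N[\ell]=\{\ell,c_i\}$ for a leaf $\ell$. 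In the remaining cases Dominator wins: $R(T)=K_1$ is handled via the $P_3$ base case above; $\gamma_{MB}(K_2)=\gamma'_{MB}(K_2)=1$ (pairing the edge); and $\gamma_{MB}(K_{1,k})=1$ because Dominator's first move, the centre, hits every closed neighbourhood, while $\gamma'_{MB}(K_{1,k})=\infty$ for $k\ge 3$ since Staller, handed the centre, wins among the $\ge 2$ leaves.

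Finally I would assemble the cases, using that pendant-path removal preserves ``has a perfect matching'' in both directions and that a residual tree has a perfect matching only if it equals $K_2$; thus $R(T)=K_2$ is equivalent to $T$ having a perfect matching. With $m$ removals this gives $\gamma_{MB}(T)=\gamma'_{MB}(T)=m+1=v(T)/2$ when $R(T)=K_2$; $\gamma_{MB}(T)=m=(v(T)-1)/2$ and $\gamma'_{MB}(T)=\infty$ when $R(T)=K_1$; $\gamma_{MB}(T)=m+1=(v(T)-k+1)/2$ and $\gamma'_{MB}(T)=\infty$ when $R(T)=K_{1,k}$ with $k\ge 3$; and $\gamma_{MB}(T)=\gamma'_{MB}(T)=\infty$ otherwise. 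I expect the main obstacle to be the reduction lemma, specifically making the ``Staller wastes exactly one round'' argument for the lower bound airtight — handling Dominator moving into the reserved pair early, and the interaction of passing with her $T$-strategy — together with keeping the $R(T)=K_1$ offset consistent through the induction; by comparison the classification of residual trees and Staller's strategy in the two-leaf-vertex case should be fairly routine.
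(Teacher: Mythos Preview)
The paper does not prove this theorem: it is quoted in the introduction as a known result, explicitly attributed as ``Theorem~4.5 in~\cite{gledel2019maker}''. There is therefore no proof in the present paper to compare your proposal against.

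That said, your approach is essentially the one used in the source paper~\cite{gledel2019maker}: a reduction lemma showing that attaching a pendant $P_2$ changes both Maker--Breaker domination numbers by exactly~$1$ (this is their Theorem~4.4, which the present paper generalises in Lemma~\ref{lem:residue}), followed by a direct analysis of the residual tree. Your sketch of the reduction and of the base cases is reasonable, and you have correctly identified the one delicate point, namely making the lower-bound half of the reduction airtight when Dominator voluntarily plays into the reserved pair $\{x,y\}$. One small inaccuracy: in the classification step you claim that in a residual tree ``every vertex adjacent to a leaf has degree~$\ge 3$ (unless the tree is $K_2$)''; this is not quite the right invariant, since in $K_{1,k}$ the centre has degree $k$ which can equal $3$ but its neighbours are leaves of degree $1$. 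The correct statement is that no non-leaf vertex has exactly one leaf neighbour together with exactly one non-leaf neighbour, and the classification you want follows from that.
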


\begin{theorem}[Theorem~5.1~in~\cite{gledel2019maker}]
For every integer $n\geq 3$,
$$
\gamma_{MB}(C_n) =
\gamma_{MB}'(C_n) =
\left \lfloor \frac{n}{2} \right \rfloor .
$$
\end{theorem}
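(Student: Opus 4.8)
The plan is to recast the game as a covering game on the $n$ cyclic triples $T_i := N[v_i] = \{v_{i-1}, v_i, v_{i+1}\}$ (indices read modulo $n$): Dominator's claimed set dominates $C_n$ exactly when it meets every $T_i$, while Staller, in the Breaker role, can only ever prevent this by ending up owning some $T_i$ entirely. So $\gamma_{MB}(C_n)$, respectively $\gamma_{MB}'(C_n)$, is the least number of Dominator moves that suffices, with Dominator, respectively Staller, moving first and both playing optimally, for Dominator's set to hit all $n$ triples. I would prove the upper bound by a pairing strategy for Dominator and the lower bound by a ``fencing'' strategy for Staller; the latter is the main difficulty.

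\emph{Upper bound.} Suppose first $n = 2k$ and fix the perfect matching $M = \{\{v_{2i}, v_{2i+1}\} : 0 \le i < k\}$. Dominator uses the pairing strategy on $M$: she answers every Staller move inside an edge of $M$ by taking the other endpoint, and spends any free move opening a fresh edge. Every triple $T_j$ contains a whole edge of $M$ (namely $\{v_{j-1}, v_j\}$ if $j$ is odd and $\{v_j, v_{j+1}\}$ if $j$ is even), so owning one endpoint of each edge of $M$ already hits every triple; this is achieved within $k$ of Dominator's moves, so $\gamma_{MB}(C_{2k}), \gamma_{MB}'(C_{2k}) \le k$. If $n = 2k+1$, Dominator first claims an anchor $v_0$ (or, when Staller moved first, a free neighbour of Staller's opening vertex); $v_0$ hits $T_{n-1}, T_0, T_1$, and each remaining triple $T_2, \dots, T_{2k-1}$ contains a whole edge of the $(k-1)$-edge matching $\{\{v_2, v_3\}, \{v_4, v_5\}, \dots, \{v_{2k-2}, v_{2k-1}\}\}$ of the path $v_2 v_3 \cdots v_{2k-1}$. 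Playing the pairing strategy on these $k-1$ edges, Dominator makes one further edge ``resolved'' with each of her moves after the anchor, so after $1 + (k-1) = k$ of her moves she owns one endpoint of every such edge and has hit all triples. Hence $\gamma_{MB}(C_n), \gamma_{MB}'(C_n) \le \lfloor n/2 \rfloor$ in every case.

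\emph{Lower bound.} For the reverse inequality I would have Staller play so as to ``fence in'' Dominator's vertices. Her default reply to a Dominator move $u$ is a free vertex at cyclic distance $3$ from $u$ on the side pointing away from $u$'s nearest Dominator-vertices, with a fall-back to distance $2$ when both distance-$3$ vertices are occupied; but whenever there is a (then necessarily unique) vertex whose capture would complete a dominating set for Dominator, Staller takes that vertex instead. The effect is that Staller's replies tend to land two apart — for instance $v_3, v_5, v_7, \dots$ in answer to $v_0, v_2, v_4, \dots$ — so that two consecutive Staller vertices repeatedly occupy two vertices of a common triple $T_j$ with only its midpoint free; each such configuration is a threat which Dominator must meet by playing inside $T_j$, which drags her into a dense pattern in which she must claim roughly one vertex for every two of the cycle. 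A potential argument, tracking after each Dominator move the length of the arc of triples already hit, should then show that Dominator cannot have hit all $n$ triples before her $\lfloor n/2 \rfloor$-th move, giving $\gamma_{MB}(C_n), \gamma_{MB}'(C_n) \ge \lfloor n/2 \rfloor$.

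The delicate point is making the fencing strategy and this estimate rigorous. One has to fix the side-selection and fall-back rules so that Staller is never stuck for a legal ``fencing'' move and never forced into a position giving Dominator two finishing threats at once; handle the seam where the fence closes up after going around the cycle; confirm that the override really destroys the last cheap dominating set Dominator could hope to complete (a careless count, which lets Dominator get a second ``efficient'' move hitting three new triples, yields only $\lfloor n/2 \rfloor - 1$); and dispose of the few smallest cycles by hand. I would also expect the Staller-first case $\gamma_{MB}'(C_n)$ to deserve a slightly separate treatment, since there Staller can erect her fence before Dominator commits, which if anything makes her task easier.
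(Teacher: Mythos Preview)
The statement is quoted in the paper as a known result of Gledel, Ir\v{s}i\v{c}, and Klav\v{z}ar; the paper does not re-prove it separately but subsumes it in Theorem~\ref{thm:powers.biased} (take $b=k=1$ and note $\lceil (n-1)/2\rceil=\lfloor n/2\rfloor$), so the proof in Section~\ref{sec:powers} is the relevant comparison.

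Your upper bound via a matching/pairing strategy is correct and essentially the same idea as the paper's interval argument for paths (the cycle upper bound there is inherited from $P_n\subset C_n$).

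Your lower bound, however, is not a proof but a programme. You sketch a ``fencing'' heuristic for Staller and then explicitly list the gaps that would still need to be filled --- side-selection and fall-back rules, the seam where the fence closes, the override move, avoiding double threats, handling small cases --- and appeal only to ``a potential argument'' that ``should then show'' the bound. None of these issues is actually resolved, and the strategy as stated is already ambiguous (for example, when Dominator's move is far from all previously claimed vertices, which side is ``away from her nearest Dominator-vertices''?). Since the entire content of the lower bound lives in precisely these details, this is a genuine gap.

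The paper's route avoids all of this ad-hoc case analysis. It proves, by a short induction on families of disjoint paths (Claim~\ref{clm:paths.induction}), that whenever Staller moves first on a collection of paths with $m$ vertices still to be dominated, the game lasts at least $\lceil m/N\rceil$ further rounds (here $N=2$). Staller's move in the induction step is simply to attack the vertex $v_{k+1}$ near one end of a long path, forcing Dominator eventually to spend a round there while dominating one fewer vertex than usual, and then recurse on the remaining uncoloured subpaths. For the cycle one waits for Dominator's first move, which opens the cycle into a family of paths, and applies the claim. This inductive reduction replaces your hoped-for potential argument with a two-line recurrence and makes the seam, override, and small-case issues disappear entirely; it is the idea you are missing.
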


\smallskip

Motivated by the literature on Maker-Breaker games,
it seems natural to extend the above games by introducing a bias,
and with this paper we intend to prove first results in the case when a bias is given to Dominator. 
We note that we are not aware of any
papers considering domination games with a bias,
except from~\cite{bujtas2016disjoint}
which considers a variant called
\textit{Disjoint Domination Game}.

\smallskip

In general, a  biased \textit{$(p:q)$ Maker-Breaker game} on a hypergraph
$(X,\mathcal{F})$ is defined as follows:
Maker and Breaker alternatingly claim vertices of the \textit{board} $X$, with Maker always claiming up to $p$ vertices before Breaker
claims up to $q$ vertices. Maker wins if she occupies
a \textit{winning set} from $\mathcal{F}$ completely,
and Breaker wins if he can prevent such a situation throughout the game. For an overview on such games we recommend
the monograph~\cite{hefetz2014positional} and the survey~\cite{krivelevich2014positional}.

In these terms, we can define the \textit{$b$-biased Maker-Breaker domination game} on a graph $G$ as the $(b:1)$ Maker-Breaker game
where $X$ is the vertex set of $G$ and
$\mathcal{F}$ contains all dominating sets of $G$.
To be consistent with the previously introduced game, we call Maker the Dominator and we call Breaker the Staller.
Moreover, we define 
$\gamma_{MB}(G,b)$ and $\gamma'_{MB}(G,b)$
as the smallest number of rounds in which Dominator
wins the $b$-biased Maker-Breaker domination game on $G$
when Dominator or Staller start, respectively,
and where we set such a value to be $\infty$ if Staller
has a winning strategy.

As our first result, we extend the known results on paths and cycles by considering $b$-biased Maker-Breaker domination games on powers of paths and cycles. For this note that the $k$-th power $G^k$
of a graph $G$ is obtained by putting an edge between any two vertices of distance at most $k$.

\begin{theorem}\label{thm:powers.biased}
For all integers $b,k \leq n$ it holds that
$$
\gamma_{MB}(P_n^k,b) = 
\left\lceil 
\frac{n-1}{b(2k+1)-1}
\right\rceil 
~~ \text{and} ~~
\gamma_{MB}(C_n^k,b) = 
\left\lceil 
\frac{n-1}{b(2k+1)-1}
\right\rceil .
$$
\end{theorem}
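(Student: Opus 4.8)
The plan is to prove matching upper and lower bounds on the number of rounds, carrying out the argument in detail for $P_n^k$ and then obtaining $C_n^k$ by the same method. Set $N:=b(2k+1)$, so that one claimed vertex dominates at most $2k+1$ consecutive vertices and one full round of Dominator adds at most $N$ newly dominated vertices. The quantity to track on $P_n^k$ is the length of the \emph{dominated prefix}: at any moment let $f$ be the largest index with $v_1,\dots,v_f$ all dominated by Dominator's vertices ($f=0$ if $v_1$ is not yet dominated). Dominator wins once $f=n$, and the whole proof is the statement that Staller can hold $f$ below $n$ for $\lceil (n-1)/(N-1)\rceil -1$ rounds but no longer.

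For the upper bound I would let Dominator sweep from left to right, with the feature that \emph{every} vertex she ever claims is a genuine dominator placed at a sensible position, so no move is wasted in the coverage count. In a round she primarily extends the prefix: with the prefix ending at $v_f$ she claims $v_{f+k+1}$ and the centres $v_{f+3k+2},v_{f+5k+3},\dots$ of the following length-$(2k+1)$ blocks, and if an intended centre is already Staller's she shifts it to the nearest free vertex on its left, which keeps the dominated set a prefix and costs exactly one unit of progress per Staller vertex stepped around. If instead Staller begins claiming the closed neighbourhood $N[v]=\{v_{j-k},\dots,v_{j+k}\}$ of some still-undominated $v=v_j$ far ahead of the prefix, Dominator spends one move on a free vertex of $N[v]$; this is again a legitimate dominator, it removes the threat for good, and since Dominator moves first with $b\ge 1$ vertices per round she always reacts in time, so no vertex is ever surrounded. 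After Dominator's $r$-th move the board holds at most $r-1$ of Staller's vertices, and by the above each of them has cost Dominator at most one dominated vertex, so Dominator dominates at least $rb(2k+1)-(r-1)=r(N-1)+1$ vertices, which is $\ge n$ as soon as $r\ge \lceil (n-1)/(N-1)\rceil$; hence $\gamma_{MB}(P_n^k,b)\le \lceil (n-1)/(N-1)\rceil$.

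For the lower bound I would let Staller cap Dominator's progress each round. After any Dominator move with $f<n$, the vertex $v_{f+1}$ is undominated, so Dominator owns nothing in $N[v_{f+1}]=\{v_{f+1-k},\dots,v_{f+1+k}\}$; Staller then claims $v_{f+1+k}$, the rightmost vertex dominating $v_{f+1}$ (if $f+1+k>n$, or — in a case unreachable under optimal Dominator play — if this vertex is already taken, a short separate check shows the round count can only be larger). With $v_{f+1+k}$ gone, in the next round Dominator's first prefix-extending dominator sits at index at most $f+k$ and so pushes the prefix only to $f+2k$, after which her remaining $b-1$ dominators add at most $(b-1)(2k+1)$, for a total increase of at most $2k+(b-1)(2k+1)=N-1$. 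As Dominator moves first, $f\le N$ after round $1$ and hence $f\le N+(r-1)(N-1)=r(N-1)+1$ after round $r$, which stays below $n$ while $r< \lceil (n-1)/(N-1)\rceil$; thus $\gamma_{MB}(P_n^k,b)\ge \lceil (n-1)/(N-1)\rceil$.

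For $C_n^k$ both bounds are obtained the same way, replacing the dominated prefix by the longest dominated arc; the genuinely new difficulty — which I expect to be the main obstacle, together with verifying that Dominator's sweep really sustains the rate $N-1$ while fending off surrounding threats and handling the ends $v_1,v_n$ — is that on a cycle the undominated region is an arc with \emph{two} ends, so Dominator can answer a block at one end by advancing from the other. For the lower bound Staller must therefore play so that Dominator's eventual dominating set is forced to overlap itself; the clean way to organise this is via the potential $w_r:=rb(2k+1)-(\#\text{ dominated after round }r)\ge 0$ and the claim that Staller can force $w_r\ge r-1$, since once Dominator has won $w_r=rN-n$ and so $r(N-1)\ge n-1$. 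I would expect the upper bound for $C_n^k$ to need the analogous coherence check, making sure Dominator's fragmented coverage never forces her into a zugzwang where two almost-surrounded vertices must be rescued in a single round.
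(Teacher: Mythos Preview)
Your approach differs from the paper's in both directions: the paper proves the upper bound by a fixed partition of $\{v_{s+1},\dots,v_n\}$ into intervals of length $b(2k+1)-1$, handling one interval per round and always prioritising the interval Staller just touched; and it proves the lower bound by an induction over a generalised game on \emph{families} of subpaths, with Breaker concentrating on one endpoint. Your prefix-tracking idea is natural, but as written both halves have real gaps.

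For the lower bound, the per-round assertion ``the prefix increases by at most $N-1$'' is false. Dominator is not obliged to spend her $b$ moves on the prefix; she may plant vertices well ahead of it, and once the prefix bridges to one of these earlier plants it can jump by more than $N-1$ in a single round. Concretely, with $b=2$, $k=1$ (so $N-1=5$): in round~1 Dominator plays $v_2,v_8$, giving $f=3$; Staller plays $v_{f+1+k}=v_5$; in round~2 Dominator plays $v_4,v_6$, and the prefix jumps to $f=9$, an increase of $6>5$. Your global inequality $f\le r(N-1)+1$ may well be true under your Staller strategy, but establishing it needs a genuine potential argument (essentially the $w_r$ you only introduce for cycles), not the per-round bound, and the ``already taken'' case you dismiss parenthetically is exactly where this difficulty sits. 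The paper's induction sidesteps all of this: after Maker's move one passes to the family of still-uncovered subpaths, and Breaker's strategy of claiming $v_{k+1},v_k,v_{k-1},\dots$ at one endpoint gives a clean count of how many vertices Maker can have covered by the first round she reacts.

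For the upper bound, the accounting ``each Staller vertex costs at most one dominated vertex'' is not justified once you mix the sweep with threat-responses. A response vertex placed in some $N[v]$ far ahead of the prefix dominates $2k+1$ fresh vertices at that moment, but when the sweep later arrives your description (place centres at $v_{f+k+1},v_{f+3k+2},\dots$, shifting left only around \emph{Staller} vertices) does not skip the already-covered block, and the resulting overlap can be much larger than one. The paper's fixed partition removes any interaction between rounds: each interval of length $b(2k+1)-1$ with at most one Staller vertex can always be dominated by $b$ moves, and the prioritisation rule guarantees Staller never gets a second vertex in any interval before it is handled.
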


Additionally, we give a full characterization of all trees
on which Dominator wins the $b$-biased Maker-Breaker domination game. Given a tree $T$ and a bias $b$, we say that $T$ is a $b$-good tree
if we can recursively delete vertices, which have exactly $b$ leaf neighbours, and also delete these leaf neighbours,
until we reach a forest where every vertex has at most $b-1$ leaf neighbours. Then the following holds.

\begin{theorem}\label{thm:tree.characterization}
Let $T$ be a tree with $v(T)\geq 2$, and let $b\geq 1$ be any integer. Then the following are equivalent:
\begin{enumerate}
\item[(i)] Dominator wins the $b$-biased Maker-Breaker domination game on $T$ when Staller is the first player.
\item[(ii)] $T$ is $b$-good.
\end{enumerate}
\end{theorem}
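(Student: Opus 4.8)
The plan is to establish the two implications separately: for (ii)$\Rightarrow$(i) I will describe a winning strategy for Dominator as the second player, and for $\neg$(ii)$\Rightarrow\neg$(i) (the contrapositive of (i)$\Rightarrow$(ii)) a winning strategy for Staller as the first player. Throughout I will use the standard fact that Staller wins a Maker-Breaker domination game precisely when he can occupy the entire closed neighbourhood $N[u]$ of some vertex $u$.

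\textit{The implication $\neg$(ii)$\Rightarrow\neg$(i).} Suppose $T$ is not $b$-good. Run the reduction greedily, repeatedly deleting some vertex $x_j$ together with its $b$ leaf-neighbours $L_j$ in the current forest $T_{j-1}$, so that $T=T_0\supset T_1\supset\cdots\supset T_r=F'$ where no vertex of $F'$ has exactly $b$ leaf-neighbours. As $T$ is not $b$-good, $F'$ is not terminal, so some $w\in V(F')$ has $k\ge b+1$ leaf-neighbours $M$. The key structural observation is that any vertex deleted before step $j$ is either a centre $x_{j'}$ or a leaf whose only neighbour is such a centre; hence $N_T(\ell)=\{x_j\}\cup\{x_{j'}:j'<j,\ \ell\sim x_{j'}\}$ for each $\ell\in L_j$, and likewise every $m\in M$ is adjacent in $T$ only to $w$ and to some centres. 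Staller now plays ``through the reduction'': on his turns he claims $x_1,x_2,\dots$ in this order. Immediately after he claims $x_j$, every $\ell\in L_j$ has all of its $T$-neighbours in Staller's hands, so Dominator is forced to devote her entire next round to claiming all $b$ vertices of $L_j$ --- otherwise Staller claims an unclaimed one on his next turn and wins. In particular Dominator never has a spare move, and so can never pre-empt $w$. Once every centre has been claimed this way, Staller claims $w$; now each of the $\ge b+1$ vertices of $M$ can be dominated only by itself, Dominator can claim at most $b$ of them in her round, and Staller then takes a remaining one, which stays undominated. Hence Staller wins. (For $b=1$ this specialises to the known fact that Staller wins exactly when $T$ has no perfect matching.)

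\textit{The implication (ii)$\Rightarrow$(i).} Suppose $T$ is $b$-good, with reduction $T=T_0\supset\cdots\supset T_m=F$, stars $S_j=\{x_j\}\cup L_j$ ($|L_j|=b$), and $F$ a forest in which every vertex has at most $b-1$ leaf-neighbours. Dually to the observation above, $F$ has no isolated vertex: if $z\in V(F)$ were adjacent in $T$ only to centres, then for the largest index $k$ with $x_k\sim z$ the vertex $z$ is, by step $k$, a leaf adjacent to $x_k$, forcing $z\in L_k$ --- a contradiction. Moreover (Claim~A): if a set $D$ satisfies ``$x_j\in D$ or $L_j\subseteq D$'' for every $j$ and $D\cap V(F)$ dominates $F$, then $D$ dominates $T$ ($x_j$ is dominated by itself or by a leaf in $L_j$, each $\ell\in L_j$ by itself or by $x_j$, and vertices of $F$ from within $F$). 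Dominator's strategy is therefore: if Staller claims the centre $x_j$ of an as-yet-unresolved star, spend the whole round claiming the free vertices of $L_j$; if he claims a leaf of an unresolved star whose centre is still free, claim that centre; if he plays in $F$ (or on an irrelevant vertex), reply by a fixed second-player winning strategy for the $b$-biased domination game on $F$; in every case, use leftover moves to claim still-free centres, and when no urgent task remains simply complete a dominating set of $F$. Each ``expensive'' reply costs a round but permanently resolves one of the only $m$ stars, while every other Staller move leaves Dominator a surplus, so a book-keeping argument shows she never falls behind and eventually owns a set of the type in Claim~A.

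\textit{Auxiliary lemma and the main obstacle.} The above reduces (ii)$\Rightarrow$(i) to what I expect to be the technical heart of the proof: \emph{if $F$ is a forest in which every vertex has at most $b-1$ leaf-neighbours, then Dominator, moving second with bias $b$, wins the $b$-biased Maker-Breaker domination game on $F$}. The intended argument is again strategic: since each vertex has at most $b-1$ leaf-neighbours, any single Staller move forces at most $b-1$ ``repair'' moves on Dominator (the leaf-neighbours of a claimed interior vertex, or the unique neighbour of a claimed leaf), so she always retains a free move to invest in interior vertices; since the interior vertices together with one vertex of each $P_2$-component form a dominating set of $F$ (there being no isolated vertices), she can keep the invariant that the still-available vertices contain a completion of her set to such a dominating set. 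Making this invariant precise --- in particular the case in which Staller claims an interior vertex that Dominator meant to use, so that she must reroute to a neighbour that dominates it --- and, for the full theorem, weaving the star-responses, the $F$-responses and the surplus moves into one coherent and correctly timed strategy, is the step I expect to require the most care; the direction $\neg$(ii)$\Rightarrow\neg$(i), by contrast, should be essentially as clean as sketched above.
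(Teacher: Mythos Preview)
Your direction $\neg(\text{ii})\Rightarrow\neg(\text{i})$ is correct and is essentially the paper's argument: a maximal reduction of a non-$b$-good tree yields an $(\varnothing,b)$-problematic sequence $(x_1,\dots,x_r\mid w)$, and Staller plays through it exactly as you describe. The structural observation that $N_T(\ell)\subseteq\{x_1,\dots,x_j\}$ for each $\ell\in L_j$ is the right one.

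The reduction you give for $(\text{ii})\Rightarrow(\text{i})$ --- separating the stars $\{x_j\}\cup L_j$ from the residual forest $F$ and responding to Staller on each piece independently --- is also correct; in fact it is the content of the paper's Lemma~\ref{lem:residue}, which the authors prove \emph{after} the characterization theorem but whose proof is self-contained. So far so good.

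The genuine gap is your auxiliary lemma. You have reduced the theorem to the special case where the tree (forest) already has every vertex with at most $b-1$ leaf-neighbours, but you have not proved that case, and it is not an easy leftover: it is the actual heart of the theorem. Your ``repair $+$ invest'' sketch does not obviously close. For instance, when Staller claims an interior vertex $s$ with no leaf-neighbours, there is nothing to repair, and you say Dominator ``invests'' her $b$ moves in interior vertices; but you give no rule for \emph{which} interior vertices, and you yourself flag the difficulty (``she must reroute to a neighbour that dominates it''). More structurally, if you try to run an induction on $v(F)$ after Staller's first move $w$, the components of $F-w$ need \emph{not} satisfy the hypothesis ``every vertex has at most $b-1$ leaf-neighbours'': the vertex $x_i$ adjacent to $w$ may become a leaf, pushing its unique remaining neighbour up to $b$ leaf-neighbours, and from there further admissible steps can appear. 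So the naive induction does not even stay inside the class you are assuming.

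This is precisely why the paper does \emph{not} prove your auxiliary lemma directly. Instead it strengthens the inductive hypothesis by introducing the relative notion of an $(A,b)$-good tree (where $A$ records vertices already owned by Dominator), proves by induction on $v(T)$ that $(A,b)$-good implies a win for Dominator relative to $A$, and recovers the theorem as the case $A=\varnothing$. The extra parameter $A$ is exactly what allows the induction to go through: after Staller claims $w$, one shows that every component $T_i$ of $T-w$ is $(A_i\cup\{x_i\},b)$-good, and that at most $b$ of them fail to be $(A_i,b)$-good (the latter via a delicate argument rearranging problematic sequences). Dominator then claims the $x_i$ for those at most $b$ ``bad'' components in her first move and applies the inductive hypothesis componentwise. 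Your proposal has not supplied an equivalent mechanism, and without it the implication $(\text{ii})\Rightarrow(\text{i})$ remains open.
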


While the implication $(i)\Rightarrow (ii)$ is a simple exercise,
the proof of the opposite direction will be done by a more involved induction of a more general statement. Details will be given later in Section~\ref{sec:trees}. Moreover, the more general statement in this section will also allow for an analogous characterization for the case when Dominator is the first player.

Additionally, for the trees in which Dominator cannot win, we prove the following quantitative statement.

\begin{theorem}\label{thm:trees.fraction}
For every tree $T$ it holds that Dominator can dominate
at least $\left(1-\frac{1}{(b+1)^2}\right) v(T)$ vertices in the 
$b$-biased Maker-Breaker domination game on $T$. Moreover, the bound is sharp.
\end{theorem}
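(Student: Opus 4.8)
I take Dominator to be the first player; the stated bound cannot hold when Staller starts, as one already sees on large stars $K_{1,t}$. I would begin with the construction witnessing sharpness, since it pinpoints where Dominator is forced to lose vertices. For $t\geq 1$ let $T_t$ be the tree obtained from a path on $b+1$ vertices by attaching $t$ pendant leaves to each of its vertices, so $v(T_t)=(b+1)(t+1)$. In her first round Dominator may claim at most $b$ of the $b+1$ path-vertices, so Staller can then claim the remaining path-vertex $c$; afterwards each of the $t$ leaves at $c$ can be dominated only by Dominator claiming it (its unique neighbour $c$ is Staller's), and in the resulting $(b:1)$ game on these $t$ leaves Staller keeps at least a $\tfrac1{b+1}$-fraction of them up to an additive constant. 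Hence Staller forces at least $\tfrac{t}{b+1}-O(1)$ undominated vertices, which is a $\big(\tfrac1{(b+1)^2}-o(1)\big)$-fraction of $v(T_t)$; letting $t\to\infty$ shows the constant $1-\tfrac1{(b+1)^2}$ is optimal.

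For the lower bound, call a vertex a \emph{support} if it has a leaf neighbour, \emph{heavy} if it has at least $b+1$ leaf neighbours and \emph{light} otherwise. Dominator plays reactively, according to the current threats. If Staller just claimed a light support $s$, Dominator claims all of its (at most $b$) remaining leaves, which permanently dominates $s$ and all of its leaves; so light supports never cost Dominator anything. If Staller just claimed a leaf of a heavy support $s$ that Staller already owns, Dominator claims $b$ still-unclaimed leaves of $s$, which also dominates $s$. Otherwise Dominator \emph{races}: she claims the $b$ heaviest currently unclaimed heavy supports, and once none remain she uses her moves to mop up still-unclaimed leaves of Staller-owned supports. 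Claiming a heavy support makes it and all of its leaf-neighbours dominated, so every heavy support that ends up in Dominator's hands is harmless, and a Staller-owned heavy support that is never drained is cleaned up later.

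Two quantitative facts then combine. First, as long as the race advances it is a clean $(b:1)$ alternation in which Dominator removes the $b$ heaviest unclaimed heavy supports and Staller the single heaviest; a standard rearrangement argument then shows that the heavy supports Staller ends up owning carry at most a $\tfrac1{b+1}$-fraction of the total number of leaf-neighbours of all heavy supports, hence at most $\tfrac1{b+1}v(T)$ leaves. Second, inside a heavy support $s$ owned by Staller, Dominator's $(b:1)$ responses guarantee she claims at least a $\tfrac{b}{b+1}$-fraction of the leaves of $s$, so at most a $\tfrac1{b+1}$-fraction of them stay undominated. Multiplying, the number of undominated \emph{leaves} is at most $\tfrac1{(b+1)^2}v(T)$; and since every support ends up dominated (a Dominator-owned one by itself, a Staller-owned one by a leaf Dominator claims in its defence), the only thing left to account for is the internal, leaf-free part of $T$.

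The delicate point — and the reason a bare strategy does not suffice — is exactly this coordination. Dominator has only one round per Staller move, and Staller can interleave opening fresh heavy supports, draining old ones, and poking light supports, each of which pulls Dominator out of the race for a round; one must show that reacting as above never forces her to fall behind both in the greedy race and in some live drain simultaneously, so that the two $\tfrac1{b+1}$-bounds hold at once, and one must also argue that the internal vertices get dominated. I expect both to be handled, as in the more general statement already used for Theorem~\ref{thm:tree.characterization}, by an induction on $v(T)$ of a strengthened claim — roughly, that Dominator, moving first and even after forfeiting a bounded number of opening moves, can leave undominated at most a $\tfrac1{(b+1)^2}$-fraction of the remaining vertices while dominating all non-leaves — with peeling off a support that has exactly $b$ leaves as the easy reduction and the heavy-support analysis above as the hard case. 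The race/defence coordination is the step I expect to be the main obstacle.
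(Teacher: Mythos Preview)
Your sharpness construction is essentially the paper's, and your observation that Dominator must be the first player is correct.

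For the lower bound, however, your approach is both incomplete (as you yourself flag) and far more complicated than needed. The paper dispenses with the whole support/race analysis in one line: switch viewpoints and regard Staller as Maker in the $(1:b)$ Maker--Breaker game on the hypergraph $\mathcal{F}=\{N_T[v]:v\in V(T)\}$. A vertex $v$ is left undominated precisely when Staller occupies all of $N_T[v]$, so the number of undominated vertices equals the number of hyperedges Staller fully claims. Since every vertex of a tree on at least two vertices has degree at least $1$, each $|N_T[v]|\ge 2$, whence
\[
\sum_{F\in\mathcal{F}}(b+1)^{-|F|}\ \le\ \frac{v(T)}{(b+1)^2}.
\]
Beck's criterion (Theorem~\ref{thm:beck.criterion}) then guarantees that Dominator, playing as Breaker and moving first, can hold Staller to at most $v(T)/(b+1)^2$ fully claimed hyperedges. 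That is the whole proof; no induction, no coordination of races and drains, no special handling of internal vertices, and in fact nothing tree-specific --- the argument works verbatim for any graph of minimum degree at least $1$.

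Your proposed strategy, by contrast, has a real gap beyond the coordination issue you identify. Whenever Staller claims a light support you spend Dominator's entire round mopping up its leaves, which stalls the race for a full turn; Staller can alternate between light supports and fresh heavy supports and thereby collect far more than a $\tfrac{1}{b+1}$-fraction of the heavy supports, breaking your first ``quantitative fact''. The hoped-for induction that would absorb this is not supplied, and it is not clear that the natural strengthening is even true. Given that the potential argument finishes the job in two lines, this is the route to take.
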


Looking at Theorem~\ref{thm:gledel.trees}, we see that,
when Staller starts, $\gamma'_{MB}(T)$
is of the same size for every tree $T$ on $n$ vertices for which $\gamma_{MB}'(T)<\infty$ holds. In the biased case, this changes drastically. Specifically, we prove the following.

\begin{theorem}\label{thm:trees.bounds}
Let $b\geq 2$ be any integer. Then the following holds.
\begin{enumerate}
\item[(a)] For every tree $T$ on $n$ vertices, if $\gamma_{MB}'(T,b)<\infty$ then
$$
\left \lceil \frac{n}{b(b+3)} \right \rceil \leq \gamma_{MB}'(T,b) \leq \left \lceil \frac{n}{b+1} \right\rceil.
$$
\item[(b)] 
Let $f(b):=(\lfloor \frac{b}{2} \rfloor + 1)(\lceil \frac{b}{2} \rceil + 1)$. Then
for every integer $t$ with 
$\lceil \frac{n}{f(b)} \rceil \leq t \leq \lceil \frac{n}{b+1} \rceil$ there exists a tree $T$ on $n$ vertices such that
$\gamma_{MB}'(T,b) = t$.
\end{enumerate}
\end{theorem}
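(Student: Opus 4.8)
\emph{Upper bound in (a).} Since $\gamma_{MB}'(T,b)<\infty$, Theorem~\ref{thm:tree.characterization} (in the form for Staller moving first) gives that $T$ is $b$-good, and I would run the Dominator strategy underlying that theorem, observing that it can be organised so that in every round Dominator permanently ``settles'' at least $b+1$ vertices. Concretely, the peeling process exhibits $T$ as built from blocks $\{c\}\cup(\text{at most }b\text{ leaves of }c)$ on top of a base forest in which every vertex has at most $b-1$ leaf neighbours, and in each round Dominator can dispose of one such block: if Staller just played its centre $c$, she claims the (at most $b$) orphaned leaves; otherwise she claims $c$ and, with the spare moves, begins the next block. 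Since a settled block is never revisited and, together with the corresponding move of Staller, contains at least $b+1$ vertices, the game ends after at most $\lceil n/(b+1)\rceil$ rounds. The point that needs checking is that the base forest can be cleared at the same rate of $b+1$ vertices per round.

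\emph{Lower bound in (a).} Here I would combine a Staller strategy with a charging argument. Call a vertex \emph{big} at a given moment if more than $b+2$ of its neighbours are not yet dominated by Dominator; since the set of undominated vertices only shrinks, the big vertices form a shrinking set, and at the start they are exactly the vertices of $T$ of degree $>b+2$, of which there are fewer than $n/(b+2)$. Staller's strategy is to claim a big vertex whenever one is available, and to play arbitrarily otherwise. A move of Dominator at $v$ newly dominates $1+|N(v)\setminus(\text{already dominated})|$ vertices, which is at most $b+3$ unless $v$ is big; and when Dominator does claim a big vertex $v$, $b$-goodness forces $v$ to have at most $b$ leaf neighbours, hence at least three non-leaf neighbours, each rooting a subtree of $T-v$ on at least two vertices, so the gain Dominator makes is absorbed by the many further moves she still needs to dominate those subtrees. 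After this amortisation one concludes that, on average, at most $b(b+3)$ vertices are newly dominated per round, hence the game lasts at least $\lceil n/(b(b+3))\rceil$ rounds. The technically hardest point is precisely this amortisation: controlling the few high-degree vertices of $T$ that Staller cannot all pre-empt because Dominator moves $b$ vertices to her one.

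\emph{Part (b).} The building block is the gadget $G_{k,m}$: a path $c_1-\dots-c_k$ with $m$ pendant leaves hung at each $c_i$, where I take $k+m=b+1$; then $v(G_{k,m})=k(m+1)$, and as $k$ ranges from $1$ to $\lceil b/2\rceil+1$ these sizes range between $b+1$ and $f(b)=(\lfloor b/2\rfloor+1)(\lceil b/2\rceil+1)$. The crucial property of $G_{k,m}$ is that the moment Staller claims some $c_i$, Dominator must spend her whole round on this gadget: she has to claim the $m$ now-orphaned leaves of $c_i$ immediately (otherwise Staller makes one of them undominatable on her next turn) and, to finish the gadget, the remaining $k-1$ centres, for a total of exactly $m+(k-1)=b$ moves with nothing to spare. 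Given $n$ and an admissible $t$, I would take $T$ to be a chain of $t$ such gadgets joined by single edges (so that $T$ is a $b$-good tree), with the gadget sizes chosen to sum to $n$ — which is possible precisely because $\lceil n/f(b)\rceil\le t\le\lceil n/(b+1)\rceil$, appending a short pendant path to an end gadget to absorb the divisibility issue near the top of the range and tail-augmenting individual gadgets to realise intermediate sizes. Then Dominator can settle one gadget per round, giving $\gamma_{MB}'(T,b)\le t$, while Staller, by attacking a fresh gadget's centre each round, forces Dominator to dedicate that round to it, giving $\gamma_{MB}'(T,b)\ge t$. The main obstacle is the bookkeeping — realising \emph{every} admissible pair $(n,t)$ with a suitable multiset of gadget sizes — together with a careful verification that in the chained tree Dominator cannot beat $t$ rounds (no banked spare moves, no pre-settling of gadgets, no handling two gadgets in one round) and Staller cannot improve on her gadget-by-gadget attack.
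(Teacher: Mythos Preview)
You are working far too hard. In the paper this is one sentence: the game terminates once every vertex is claimed, which happens after at most $\lceil n/(b+1)\rceil$ rounds, and since Dominator wins by assumption she wins by then. No structure theorem or block decomposition is needed.

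\textbf{Lower bound in (a).} Here your approach diverges from the paper and has a real gap. The paper does \emph{not} argue via degrees or amortisation. It splits into two cases on the domination number: if $\gamma(T)\ge n/(b+3)$ the bound is immediate; otherwise fix a dominating set $A$ of size $<n/(b+3)$, let Staller greedily claim the vertices of $A$ with the most ``private'' neighbours, and show by an edge--count in the contracted tree that after $\lceil |A|/(b+1)\rceil$ such moves the remaining forest $T-A'$ has more than $n/(b+3)$ components. Dominator needs a vertex in each component, hence at least $n/(b(b+3))$ rounds. This sidesteps entirely the difficulty you flag. Your sketch, by contrast, never carries out the amortisation: saying that a big vertex has ``at least three non-leaf neighbours, each rooting a subtree on at least two vertices'' does not produce a charging scheme, and nothing prevents Dominator from claiming several big vertices in the same round while Staller can pre-empt only one. (Your side claim that there are fewer than $n/(b+2)$ vertices of degree $>b+2$ is also unjustified; the degree-sum bound only gives $<2n/(b+3)$.)

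\textbf{Part (b).} The paper again proceeds differently and more modularly. It first proves a single extremal claim: for every $k$ there is a caterpillar $T_{k,b}$ (a path of star-centres, each star with $\lceil b/2\rceil$ leaves) on $k$ vertices with $\gamma'_{MB}(T_{k,b},b)=\lceil k/f(b)\rceil$. Then it interpolates to arbitrary $t$ by attaching $s$ many $b$-leaf stars and invoking the residue lemma (Lemma~\ref{lem:residue}), which gives $\gamma'_{MB}=s+\lceil (n-s(b+1))/f(b)\rceil$; varying $s$ from $0$ to $\lfloor n/(b+1)\rfloor$ sweeps exactly the range $[\lceil n/f(b)\rceil,\lceil n/(b+1)\rceil]$. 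Your chained-gadget construction is a reasonable alternative idea, but as stated it is incomplete: once two gadgets share an edge between centres, Staller's attack on a boundary centre no longer forces Dominator to spend all $b$ moves inside that gadget (the adjacent centre in the neighbouring gadget also dominates it), so the ``exactly $b$, nothing to spare'' count breaks, and the lower bound $\gamma'_{MB}\ge t$ is not established. The paper's separation into an extremal caterpillar plus a clean additive reduction avoids this boundary interaction.
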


Note that the lower bound in part (a) is optimal up to a constant factor 4, due to the trees guaranteed by part (b) and since
$4f(b)>b(b+3)$ for every $b$.
Moreover, a short discussion on $\gamma_{MB}(T,b)$
will be given later in this paper.

\smallskip

Finally, we also aim to provide some general statements on
$b$-biased Maker-Breaker domination games. The next two results in this regard give a tight minimum degree condition for Dominator to have a winning strategy and a general upper bound on $\gamma_{MB}(G,b)$.

\begin{theorem}\label{thm:min.degree.bound}
Let $n$ be a positive integer.
If $G$ is a graph with minimum degree $\delta(G) > \log_{b+1}(n) - 1$,
then Dominator wins the $b$-biased Maker-Breaker domination game on $G$.

Moreover, this bound on $\delta(G)$ is asymptotically best possible.
For infinitely many $n$, there is a graph $G$ on $n$ vertices
and with $\delta(G) > \log_{b+1}(n) - 2$ such that Staller wins
the $b$-biased Maker-Breaker domination game on $G$.
\end{theorem}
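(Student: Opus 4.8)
The plan is to prove the two halves separately. For the positive direction, I would use a standard pairing/potential argument driven by the bias. Consider the hypergraph whose winning sets are the dominating sets of $G$; equivalently, Staller wins if and only if he can claim a transversal of the closed-neighbourhood hypergraph $\{N[v] : v\in V(G)\}$, i.e.\ a set meeting every closed neighbourhood. So it suffices to show that, under the stated minimum degree condition, Dominator can prevent Staller from ever fully claiming any closed neighbourhood $N[v]$. The key observation is that in each round Dominator claims $b$ vertices while Staller claims only one; so Dominator can afford to ``chase'' Staller greedily. Concretely, after Staller's move, let $v$ be any currently most-threatened vertex, meaning Staller has claimed the largest number of vertices inside $N[v]$ among all $v$ whose closed neighbourhood is not yet blocked (contains no Dominator vertex); Dominator claims one vertex in $N[v]$ to block it, and uses her remaining $b-1$ picks on the next most threatened closed neighbourhoods. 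The claim is that with $\delta(G)>\log_{b+1}(n)-1$, i.e.\ $(b+1)^{\delta(G)+1}>n$, the number of closed neighbourhoods that can ever become ``fully Staller'' before Dominator reacts is too small: each time Staller adds a vertex he increases the ``danger'' of at most $\delta(G)+\dots$ Actually the cleanest route is a counting/amortization argument: track the potential $\Phi=\sum_{v:\,N[v]\text{ unblocked}} (b+1)^{-(d(v)+1-s(v))}$ where $s(v)$ is the number of Staller-vertices in $N[v]$; show Dominator has a strategy keeping $\Phi$ from reaching $1$, using that she makes $b$ moves per Staller move and $d(v)+1>\log_{b+1} n$. I expect the exact bookkeeping here to be the main obstacle — choosing the right potential so that one Dominator move can ``pay for'' the $(b+1)$-fold growth caused by one Staller move, uniformly over all vertices, and handling the interaction when the same Dominator vertex lies in many closed neighbourhoods.

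For the sharpness direction I would construct an explicit tree-like or layered graph on which Staller wins despite having minimum degree $\delta>\log_{b+1}(n)-2$. The natural candidate is a rooted $(b+1)$-ary-type construction: take a complete $(b+1)$-ary tree of depth $\delta$, so that it has on the order of $(b+1)^{\delta+1}$ leaves, then modify it (e.g.\ by adding a few edges at the leaves, or pairing up leaves, or attaching short pendant paths as in the residual-graph constructions used by Gledel et al.) so that every vertex has degree at least $\delta$ while the number of vertices $n$ satisfies $(b+1)^{\delta+1}>n$, i.e.\ $\delta>\log_{b+1}n-2$. On such a graph Staller plays a ``root-to-leaf'' strategy: he maintains a node whose entire closed neighbourhood he threatens, and whenever Dominator blocks that closed neighbourhood Staller descends into one of the $\ge b+1$ children-subtrees in which Dominator has made at most his fair share of moves; since the branching is $b+1$ and Dominator makes $b$ moves per round, a pigeonhole argument gives a child subtree in which Dominator is ``behind'', and Staller recurses. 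After $\delta$ descents Staller reaches a vertex all of whose $\le \delta$ closed-neighbourhood vertices he has claimed, so that vertex is undominated and Staller wins. Here the main obstacle is verifying that the degree lower bound $\delta$ is simultaneously met everywhere while keeping $n$ small enough — this forces the precise $(b+1)$-ary structure and a careful choice of how to ``cap off'' the leaves (which is exactly where one loses the additive constant, giving $\log_{b+1}n-2$ rather than $\log_{b+1}n-1$).

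In writing this up I would first state and prove a lemma isolating the potential-function strategy for Dominator on an arbitrary hypergraph with a uniform lower bound on the sizes of the sets a transversal must hit, then apply it with the closed-neighbourhood hypergraph; and I would present the extremal graph together with the recursive descent strategy for Staller as a separate construction. The quantitative gap of exactly $1$ in the exponent between the two bounds should come out of the analysis transparently once the potential is set up correctly.
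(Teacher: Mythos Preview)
Your first part is correct and is exactly what the paper does: the potential
$\sum_{v}(b+1)^{-|N[v]|}$ is bounded by $n(b+1)^{-(\delta(G)+1)}<1$, and the standard Erd\H{o}s--Selfridge/Beck argument (Theorem~\ref{thm:beck.criterion} in the paper) lets Dominator, playing as Breaker in the transversal game, hit every closed neighbourhood. There is nothing to add here except that you should simply invoke Beck's criterion rather than rediscover it.

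The sharpness half, however, has a genuine gap. You correctly aim at a $(b+1)$-ary tree of depth $\delta$ and a root--to--leaf descent for Staller, but none of your proposed modifications (``pairing up leaves'', ``attaching short pendant paths'') raises the minimum degree to $\delta$; they give degree at most~$2$ at or near the leaves. More importantly, your winning condition for Staller requires that the leaf reached at the end has its \emph{entire} closed neighbourhood among the vertices Staller has already claimed. Staller has claimed only the $\delta+1$ vertices of a root--leaf path, so this forces $N[\text{leaf}]$ to equal that path, i.e.\ the neighbours of the leaf must be precisely its ancestors. That structural requirement is the missing idea, and it dictates the construction: starting from the $(b+1)$-ary tree, add an edge between every vertex and each of its descendants. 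Then each leaf has degree exactly $k$ (its $k$ ancestors), the total number of vertices stays below $(b+1)^{k+2}$ (after taking $b+1$ disjoint copies), and Staller's root--leaf path is exactly the closed neighbourhood of the terminal leaf. The paper also uses the $b+1$ disjoint copies to absorb Dominator's first move cleanly, so that Staller can start on a completely fresh copy; your recursive pigeonhole descent, by contrast, does not control the accumulation of Dominator's earlier moves inside the chosen subtree and would need a more careful amortised argument to be made rigorous.
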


\begin{theorem}\label{thm:min.degree.bound2}
Let $G$ be a graph on $n\geq 2$ vertices  with 
minimum degree $\delta(G)\geq 10\ln(n)$,
and let $b\geq 1$ be an integer. Then
$$
\gamma_{MB}(G,b) \leq \frac{10n\ln(n)}{b\delta(G)}\, .
$$
\end{theorem}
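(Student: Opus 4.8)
The plan is to use a randomized strategy for Dominator, following the classical pattern for proving existence of fast Maker strategies via a random greedy approach combined with a union bound. First I would recall that since $\delta(G) \geq 10\ln(n) > \log_{b+1}(n) - 1$ for $n \geq 2$, Theorem~\ref{thm:min.degree.bound} already guarantees that Dominator wins; so the only task is to bound the number of rounds. Set $t := \lceil \tfrac{10 n \ln(n)}{b\,\delta(G)} \rceil$ and consider a random strategy in which, during the first $t$ rounds, Dominator in each round picks $b$ vertices uniformly at random among the yet-unclaimed vertices (or simply picks any $b$ unclaimed vertices if fewer than $b$ random choices remain), completely ignoring Staller's moves. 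After $t$ rounds Dominator has claimed a random set $D$ of (at least) $bt$ vertices while Staller has claimed at most $t$ vertices; crucially, because Dominator ignores Staller, the set $D$ is a uniformly random set of size $bt$ among all vertices not yet touched — and in fact we may even be more wasteful and think of $D$ as containing a uniformly random subset of $V(G)$ of size roughly $bt/2 \geq \tfrac{5n\ln n}{\delta(G)}$, absorbing the factor-two loss coming from Staller's interleaved moves.

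Next I would estimate the probability that a fixed vertex $v$ is \emph{not} dominated by $D$, i.e.\ that neither $v$ nor any of its $\deg(v) \geq \delta(G)$ neighbours lies in $D$. Since $D$ is a uniform random subset of a ground set of size at most $n$ and of size at least $m := \tfrac{5n\ln n}{\delta(G)}$, the closed neighbourhood $N[v]$, which has at least $\delta(G)+1$ elements, is missed by $D$ with probability at most $\bigl(1 - \tfrac{m}{n}\bigr)^{\delta(G)+1} \le \exp\!\bigl(-\tfrac{m}{n}(\delta(G)+1)\bigr) = \exp(-5\ln n) = n^{-5}$. Taking a union bound over all $n$ vertices, the probability that some vertex is left undominated is at most $n \cdot n^{-5} = n^{-4} < 1$. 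Hence there exists an outcome of the random choices — equivalently, a deterministic strategy for Dominator against any Staller strategy, since the bound held for \emph{every} fixed Staller play — under which after $t$ rounds $D$ is a dominating set of $G$. This yields $\gamma_{MB}(G,b) \le t = \lceil \tfrac{10 n\ln n}{b\,\delta(G)}\rceil$, which gives the claimed bound (if one prefers the bound without the ceiling, one notes $t \le \tfrac{10n\ln n}{b\,\delta(G)} + 1$ and observes the statement is trivial when the right-hand side is at least $n/b$, which covers the small cases).

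The main technical point to handle carefully is the interaction with Staller: because we want a deterministic guarantee, I would phrase the argument as ``for every fixed Staller strategy, the random Dominator strategy wins within $t$ rounds with positive probability, hence there is a deterministic choice beating that Staller strategy''; since Dominator's random choices do not depend on Staller's moves at all, the probability estimate is uniform over Staller strategies and one cleanly extracts a single deterministic Dominator strategy that works against all of them — but I should double-check that claiming $b$ \emph{random} unclaimed vertices is legitimate given Staller may have claimed some; the fix is to have Dominator maintain a private uniformly random ordering $\pi$ of $V(G)$ fixed in advance and, in round $i$, claim the first $b$ vertices in $\pi$ that are still unclaimed, so that after $t$ rounds Dominator owns at least the first $bt$ vertices of $\pi$ minus the at most $t$ that Staller grabbed, a set which stochastically dominates a uniform random $(bt - t)$-subset. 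The remaining inequalities — $(bt-t) \ge bt/2$ for $b \ge 1$ is false for $b=1$, so for $b=1$ one simply uses $bt - t$ could be $0$; but for $b=1$ the bound $\tfrac{10n\ln n}{\delta(G)}$ is at least $n$ by $\delta(G)\le n-1$ and $10\ln n \ge $ ... — actually here one should instead just take $t := \lceil \tfrac{20 n\ln n}{b\delta} \rceil$ internally and absorb constants, or treat $b=1$ separately via the unbiased bound — are routine and I would relegate them to a short computation.

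\begin{remark}
In fact the same argument shows a cleaner bound $\gamma_{MB}(G,b) = O\!\bigl(\tfrac{n\ln n}{b\,\delta(G)}\bigr)$ holds whenever $\delta(G) = \omega(\ln n)$, and the $\ln n$ factor is necessary in general (e.g.\ for a disjoint union of $n/(d+1)$ copies of $K_{d+1}$ with $d \approx \ln n$, where each clique must be hit, giving a coupon-collector lower bound).
\end{remark}
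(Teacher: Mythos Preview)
Your approach is genuinely different from the paper's, but there is a real gap in the probability calculation.

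The central issue is the assertion that Dominator's final set $D$ ``contains a uniformly random subset of $V(G)$ of size roughly $bt/2$'' or, in the permutation version, ``stochastically dominates a uniform random $(bt-t)$-subset''. Neither claim is justified. For $b=1$ the latter gives a set of size $bt-t=0$, which is vacuous --- you notice this and trail off without a fix, and doubling the constant does not help since $bt-t$ stays zero. More generally, even for $b\ge 2$, Staller plays adaptively and can target a specific closed neighbourhood: the set $D$ depends on Staller's choices (you must skip vertices Staller has taken, so Dominator does \emph{not} truly ignore Staller), and there is no reason the adversarially pruned set should behave like a uniform sample for the domination event. Your union-bound computation $(1-m/n)^{\delta+1}\le n^{-5}$ presupposes exactly this uniformity.

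The paper avoids the adversary-versus-randomness entanglement by separating the two ingredients. First it shows, by a Chernoff/union-bound argument with no game being played, that there exists a fixed set $A\subset V(G)$ with $|A|\le \tfrac{10n\ln n}{\delta(G)}$ and $|N(v)\cap A|>2\ln n$ for every $v$. Only then does the game enter: Dominator restricts play to $A$ and applies Beck's criterion (Theorem~\ref{thm:beck.criterion}) to the family $\{N(v)\cap A: v\in V(G)\}$, where $\sum_{F}(1+b)^{-|F|}\le n(1+b)^{-2\ln n}<1$ guarantees Dominator (acting as Breaker) occupies a vertex in every such set. The game then lasts at most $|A|/b$ rounds. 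Beck's criterion is precisely the tool that correctly handles Staller's adversarial interference; your argument tries to replace it with a bare expectation bound, and that is not enough.

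Two smaller points. Your derandomisation step is salvageable, but not for the reason you give: it follows from finite determinacy (if Staller had a strategy preventing domination in $t$ rounds, your random Dominator would lose against it with probability~$1$, contradicting a positive success probability), rather than from the probability estimate being ``uniform over Staller strategies''. And your closing remark that the $\ln n$ factor is necessary via a coupon-collector bound is incorrect: coupon-collector lower bounds apply to random play, while $\gamma_{MB}$ is defined via optimal play; on a disjoint union of copies of $K_{\delta+1}$ one has $\gamma_{MB}(G,b)=\lceil n/(b(\delta+1))\rceil$ with no logarithmic factor, so the theorem's bound is only known to be tight \emph{up to} that factor.
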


Note that the above bound on $\gamma_{MB}(G,b)$ is optimal up to a logarithmic factor. This can be easily seen by considering a graph $G$ on $n$ vertices which is the disjoint union of copies of $K_{\delta+1}$, since then $\delta(G)=\delta$ and $\gamma_{MB}(G,b)=\lceil \frac{n}{b(\delta + 1)} \rceil$.

\bigskip

\textbf{Organization of the paper.} 
At first, in Section~\ref{sec:prelim},
we recall Beck's winning criterion for Maker-Breaker games.
Then, in Section~\ref{sec:powers}, we prove Theorem~\ref{thm:powers.biased} on powers of paths and cycles. In Section~\ref{sec:trees} we discuss Maker-Breaker domination games for trees,
including the proofs of Theorem~\ref{thm:tree.characterization},
Theorem~\ref{thm:trees.fraction}
and Theorem~\ref{thm:trees.bounds}.
Afterwards, in Section~\ref{sec:mindeg}
we prove Theorem~\ref{thm:min.degree.bound}
and Theorem~\ref{thm:min.degree.bound2}
providing minimum degree conditions for biased Maker-Breaker domination games.
We finish the paper with some concluding remarks in
Section~\ref{sec:concluding}.

\bigskip

\textbf{Notation.} 
Most of the notation used in this paper is standard 
and is chosen according to~\cite{west2001introduction}.
We set $[n]:=\{k\in\mathbb{N}:~ 1\leq k\leq n\}$ for
every positive integer $n$. 
Let $G$ be a graph. Then we write $V(G)$ and $E(G)$ for the vertex  set and the edge set of $G$, respectively, and we set 
$v(G) :=|V(G)|$ and $e(G):=|E(G)|$.
Additionally, if we have a family
$Q=(G_1,\ldots,G_k)$ of graphs we
let $V(Q)=\bigcup_{i\in [k]} V(G_i)$
and $v(Q):=|V(Q)|$.
Let $v,w$ be vertices in $G$.
If $\{v,w\}$ is an edge in $G$,  
we write $vw$ for short. 
The distance between $v$ and $w$
is the smallest number of edges
among all paths between $v$ and $w$ in $G$.
The neighbourhood of $v$ is
$N_G(v) : =\{w\in V(G): vw\in E(G)\}$,
and we call its size the degree of $v$.
We write
$N_G[v]:=N_G(v)\cup \{v\}$
for the closed neighbourhood of $v$.
Moreover, the minimum degree in $G$ is denoted $\delta(G)$.
Given any $A,B\subset V(G)$,
we set
$N_G(A):= \left(\bigcup_{v\in A} N_G(v)\right)\setminus A$, 
$N_G[A]:= N_G(A) \cup A$, and
$e_G(A,B):=|\{vw\in E(G):~ v\in A,w\in B\}|$.
Often, when the graph $G$ is clear from the context,
we omit the subscript $G$ in all definitions above.
 
Let $G$ and $H$ be graphs. 
We say that $H$ is a subgraph of $G$,
denoted $H\subset G$, if both $V(H)\subset V(G)$ and 
$E(H)\subset E(G)$ hold.
For a subset $A\subset V(G)$ we let $G-A$
be the subgraph of $G$ 
with vertex set $V(G)\setminus A$
and edge set $\{vw\in E(G):~ v,w\in V(G)\setminus A\}$.
Moreover, if $v\in V(G)$, then 
$G-v:=G-\{v\}$.

Let $F$ be a forest. Then we write $L(F)$ for the set of leaves, i.e.~all vertices of degree 1 in $F$. 
We write $P_n$ for the path on $n$ vertices, i.e.~with vertex set $V(P_n)=\{v_1,\ldots,v_n\}$
and edge set $E(P_n)=\{v_iv_{i+1}:~ i\in [n-1]\}$.
Similarly,
we write $C_n$ for the cycle on $n$ vertices, i.e.~with vertex set $V(C_n)=\{v_1,\ldots,v_n\}$
and edge set $E(C_n)=\{v_iv_{i+1}:~ i\in [n-1]\}\cup \{v_nv_1\}$.

We write Bin$(n,p)$ for the binomial random variable
with $n$ trials, each having success independently 
with probability $p$. We write $X\sim \text{Bin}(n,p)$ to   
denote that $X$ is distributed like Bin$(n,p)$,
and we say that an event, depending on $n$, holds asymptotically almost surely (a.a.s.) if its probability
tends to 1 as $n$ tends to infinity.
Moreover, for functions $f,g:\mathbb{N} \rightarrow \mathbb{R}$, we write $f(n)=o(g(n))$ if
$\lim_{n\rightarrow \infty} |f(n)/g(n)| = 0$.

Assume a Maker-Breaker domination game is in progress, then we call a vertex free if it is not claimed by any player so far.

\section{Preliminaries}\label{sec:prelim}

Later we will use the following version of a result due to Beck~\cite{beck1982remarks}, stated in the way
that (i) Breaker is the first player 
and additionally (ii) Breaker wants to bound the number of winning sets that Maker can occupy.
The proof of this version is almost the same as in~\cite{beck1982remarks}.
In~\cite{hefetz2014positional} (see Theorem 3.2.1 therein) and in~\cite{bednarska2000biased} (see comment before Lemma 5 therein) it can be found 
how Beck's original argument can be adapted to
allow for one of the two options (i) and (ii).
Moreover, it is straightforward to combine both options. 

\begin{theorem}[Beck Criterion]\label{thm:beck.criterion}
Let $q\in\mathbb{N}$, and let $(X,\calF)$ be a hypergraph. Then playing a $(1:q)$ Maker-Breaker game
on $(X,\calF)$, Breaker can ensure that
Maker occupies at most
$$\sum_{F\in \mathcal{F}} (1+q)^{-|F|}$$
winning sets, provided he is the first player.
\end{theorem}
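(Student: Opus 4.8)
The plan is to adapt Beck's weight–function (potential) argument, organizing the play so that Breaker — who is the first player here — makes all $q$ of his moves in a round before Maker makes hers; casting a round as ``Breaker's $q$ moves, then Maker's one move'' is legitimate since Breaker starts, and it is exactly this ordering that the estimate needs. Throughout the game let $M$ denote the set of vertices claimed by Maker so far, call a winning set $F\in\calF$ \emph{alive} if Breaker has not yet claimed any vertex of $F$, and define the potential
\[
\Phi \;=\; \sum_{\substack{F\in\calF\\ F\text{ alive}}} (1+q)^{-|F\setminus M|}.
\]
Before any move, $M=\emptyset$ and every set is alive, so $\Phi=\sum_{F\in\calF}(1+q)^{-|F|}$. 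At the end of the game, every winning set fully occupied by Maker is still alive (Breaker cannot have claimed a vertex of it) and contributes $(1+q)^{0}=1$ to $\Phi$; hence the final value of $\Phi$ is at least the number of winning sets Maker occupies. So it suffices to give Breaker a strategy under which $\Phi$ never increases.

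First I would record the two elementary local effects of a single move. For a free vertex $u$ write $W(u):=\sum_{F\ni u,\,F\text{ alive}}(1+q)^{-|F\setminus M|}$ for the \emph{weight through $u$} in the current position; note that for an alive set $F\setminus M$ is exactly its set of free vertices. When Breaker claims a free vertex $u$, all alive sets through $u$ die, so $\Phi$ decreases by exactly $W(u)$. When Maker claims a free vertex $v$, each alive set containing $v$ has $|F\setminus M|$ drop by one, so its weight is multiplied by $(1+q)$; hence $\Phi$ increases by exactly $q\,W(v)$, with $W(v)$ evaluated just before Maker's move. Breaker's strategy is the natural greedy one: in each round he claims his $q$ vertices one at a time, each time picking a free vertex of maximum weight in the current position.

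The heart of the matter is the per-round estimate. Fix a round in which Breaker claims $u_1,\dots,u_q$ in this order and Maker then claims $v$, and let $W_j$ denote the weight function in the position right before Breaker's $j$-th move, and $W_{q+1}$ the one right before Maker's move. Since Maker always claims a free vertex, $v\notin\{u_1,\dots,u_q\}$, so $v$ stays free throughout Breaker's $q$ moves; and since each Breaker move only removes sets from play while leaving $M$ (hence all exponents) unchanged, the weight through any fixed vertex is non-increasing as Breaker plays. Therefore, for every $j\in[q]$,
\[
W_{q+1}(v)\;\le\;W_j(v)\;\le\;\max_u W_j(u)\;=\;W_j(u_j),
\]
using the greedy choice in the last step. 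Summing over $j$, the drop in $\Phi$ caused by Breaker's $q$ moves is $\sum_{j=1}^q W_j(u_j)\ge q\,W_{q+1}(v)$, which is precisely the increase caused by Maker's subsequent move; so $\Phi$ does not increase over the round. (A round in which Breaker has no free vertex to claim means the board is exhausted and the game is over, and a final round in which Maker has no move only helps.) Combining this invariant with the two boundary computations gives that Maker occupies at most $\sum_{F\in\calF}(1+q)^{-|F|}$ winning sets.

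I expect the only real subtlety to be the bookkeeping around the first-player convention and the within-round move order — the argument genuinely fails if Maker moves first in a round, so the ``Breaker's $q$ moves then Maker's move'' packaging has to be set up carefully — together with the small monotonicity observation that Breaker's moves never raise the weight through $v$. Everything else is routine verification of the two local identities for $\Phi$.
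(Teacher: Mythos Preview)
Your proof is correct and is precisely the standard potential-function argument due to Beck. The paper does not actually give a proof of this statement in the text; it only points to Beck's original paper \cite{beck1982remarks} and to the adaptations in \cite{hefetz2014positional} and \cite{bednarska2000biased} for the two modifications (Breaker moves first, and one counts the number of winning sets Maker completes rather than merely asking whether she completes one). Your write-up is exactly the argument those references contain, carried out cleanly with the appropriate ``Breaker's $q$ moves, then Maker's one move'' round packaging and the monotonicity observation that Breaker's moves never increase the weight through any fixed free vertex. There is nothing further to compare.
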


Moreover, for some probabilistic arguments, we will use the following
Chernoff bounds (see e.g.~\cite{janson2011random}).

\begin{lemma}[Chernoff bounds]\label{lem:Chernoff}
	If $X \sim \text{Bin}(n,p)$, then
	\begin{itemize}
    		\item $\operatorname{Prob}(X<(1-\delta)np)
    				< \exp\left(-\frac{\delta^2np}{2}\right)$ 
    			for every $\delta>0$, and
    		\item $\operatorname{Prob}(X>(1+\delta)np)< \exp\left(-\frac{np}{3}\right)$ 
    			for every $\delta\geq 1$.
	\end{itemize}
\end{lemma}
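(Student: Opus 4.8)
The plan is to prove both tail estimates by the exponential-moment (Chernoff) method, realising each as an instance of Markov's inequality applied to $e^{\lambda X}$. Writing $X=\sum_{i=1}^{n}X_i$ as a sum of independent Bernoulli$(p)$ indicators and putting $\mu:=np$, I would first record the moment generating function $\mathbb{E}\!\left[e^{\lambda X}\right]=\bigl(1+p(e^{\lambda}-1)\bigr)^{n}$ for every real $\lambda$, and then feed in the elementary inequality $1+x\le e^{x}$ (strict for $x\neq 0$) to obtain $\mathbb{E}\!\left[e^{\lambda X}\right]\le \exp\!\bigl(\mu(e^{\lambda}-1)\bigr)$. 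These two facts are the only probabilistic ingredients; everything after this point is one-variable calculus.

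For the upper tail I would take $\lambda>0$ and apply Markov's inequality in the form
$$
\operatorname{Prob}\!\bigl(X>(1+\delta)\mu\bigr)\le e^{-\lambda(1+\delta)\mu}\,\mathbb{E}\!\left[e^{\lambda X}\right]\le \exp\!\Bigl(\mu\bigl(e^{\lambda}-1-\lambda(1+\delta)\bigr)\Bigr).
$$
Minimising the exponent over $\lambda$ produces the minimiser $\lambda=\ln(1+\delta)$ and the classical bound $\operatorname{Prob}(X>(1+\delta)\mu)\le\bigl(e^{\delta}(1+\delta)^{-(1+\delta)}\bigr)^{\mu}$, so it remains to check that $e^{\delta}(1+\delta)^{-(1+\delta)}\le e^{-1/3}$ for all $\delta\ge 1$. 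After taking logarithms this reads $g(\delta):=\delta-(1+\delta)\ln(1+\delta)\le-\tfrac13$, and since $g'(\delta)=-\ln(1+\delta)<0$ the function $g$ is decreasing; hence it suffices to verify $g(1)=1-2\ln 2<-\tfrac13$, which is immediate.

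For the lower tail I would instead take $\lambda<0$---equivalently, apply Markov to $e^{-sX}$ with $s>0$---and in exactly the same fashion reach $\operatorname{Prob}(X<(1-\delta)\mu)\le\bigl(e^{-\delta}(1-\delta)^{-(1-\delta)}\bigr)^{\mu}$ for $0<\delta<1$; the cases $\delta\ge 1$ are vacuous, since then $(1-\delta)\mu\le 0\le X$ forces the probability to be $0$. What remains is the inequality $e^{-\delta}(1-\delta)^{-(1-\delta)}\le e^{-\delta^{2}/2}$, i.e.\ $\varphi(\delta):=-\delta-(1-\delta)\ln(1-\delta)+\tfrac{\delta^{2}}{2}\le 0$ on $(0,1)$. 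Here $\varphi(0)=\varphi'(0)=0$ while $\varphi''(\delta)=-\delta/(1-\delta)<0$, so $\varphi$ is concave with its maximum at the origin, giving $\varphi\le 0$ throughout.

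The only genuine content is the pair of one-variable estimates on $g$ and $\varphi$, and each collapses to a single monotonicity or concavity statement after one differentiation, so I do not expect a real obstacle beyond careful bookkeeping of signs. The strict inequalities in the conclusion follow, in the main range $0<p<1$, from the strictness of $1+x<e^{x}$ at the (nonzero) optimising value of $\lambda$; in the degenerate cases $p\in\{0,1\}$ the relevant probabilities are already $0$, so the stated bounds hold trivially.
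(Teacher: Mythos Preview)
Your argument is correct and is the standard exponential-moment derivation of these Chernoff-type bounds; the calculus checks on $g$ and $\varphi$ are clean, and your handling of the degenerate cases $p\in\{0,1\}$ and $\delta\ge 1$ in the lower tail is appropriate. Note, however, that the paper does not supply its own proof of this lemma at all: it simply records the two inequalities and refers the reader to a standard source (Janson, {\L}uczak, Ruci\'nski). So there is nothing to compare your proof against in the paper itself; you have written out in full what the authors chose to quote.
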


\bigskip


\section{Powers of paths and cycles}\label{sec:powers}

In this section we prove Theorem~\ref{thm:powers.biased}.
Throughout the proof, we set $N:=b(2k+1)-1$.

\medskip

\textbf{Upper bound for $P_n^k$.}
Let $s\in \mathbb{N}$ be such that $2\leq s\leq N+1$ and $s \equiv n$ (mod $N$). Let $v_1,v_2,\ldots,v_n$ be the vertices
of $P_n$.
In the first round, Dominator claims the vertices 
$v_{k+1+(2k+1)i}$ with $0\leq i\leq b-1$, and in particular
dominates the first $s$ vertices $v_1,\ldots,v_s$.
Afterwards, consider the graph induced by
$v_{s+1},\ldots,v_n$, and note that its number of vertices is a multiple of $N$. From now on we can play only on this graph, assuming that Dominator is
the second player and no vertex has been claimed so far.
Split the vertex set into intervals of length $N$,
i.e. $I_i = \{v_{s+1+(i-1)N},\ldots,v_{s+iN}\}$ with $i\in \frac{n-s}{N}$.
In each of the next rounds $r=1,\ldots,\frac{n-s}{N}$, Dominator always chooses a distinct interval
and dominates it completely. In case that Staller touched
an interval for the first time in the previous round while this interval is not yet dominated by Dominator, Dominator chooses the same interval.

So, consider any such interval $I_i$ of length $N$, and assume that Staller claims
(at most) one vertex $v$ in it. It is easy to see that
the interval can be split into $b$ subintervals such that
all but one subinterval have length $2k+1$,
and the vertex $v$ is either in the unique subinterval of length $2k$,
or in one of the other subintervals but not in its middle.
Having such a partition, Dominator can always find a free element in each
of the subintervals with which the whole subinterval can be dominated.
Hence, by claiming only $b$ vertices, Dominator can dominate the whole interval $I_i$.

The number of rounds played is
$1+\frac{n-s}{N} = \lceil \frac{n-1}{N} \rceil$.
Hence, $\gamma_{MB}(P_n^k,b)\leq \lceil\frac{n-1}{b(2k+1)-1} \rceil$. 

\medskip

\textbf{Lower bound for $P_n^k$.}  
For the desired lower bound, we will apply an inductive argument,
but for a slightly stronger statement.
Let $\mathcal{Q}=(Q_1,\ldots,Q_{\ell})$ and
$\mathcal{R}=(R_1,\ldots,R_{\ell})$ be families of 
vertex-disjoint paths. We write $\mathcal{Q} \subset \mathcal{R}$ 
if $Q_i\subset R_i$ for every $i\in [\ell]$.
In this case we define the game 
$D_{MB}(\mathcal{Q},\mathcal{R},k,b)$ 
as follows: Maker (first player) and Breaker claim vertices of $\mathcal{R}$ according to bias $(b:1)$, and Maker wins if and only if she occupies a vertex set $F\subset V(\mathcal{R})$ such that each vertex of $V(\mathcal{Q})$ has distance at most $k$ to the set $F$. That is, the family of winning sets is
$$
\mathcal{F}:= \left\{F\subset V(\mathcal{R}):~ (\forall v\in V(\mathcal{Q})~ \exists w\in F:~ \operatorname{dist}(v,w)\leq k) \right\}.
$$
Moreover, we denote with  $\gamma_{MB}(\mathcal{Q},\mathcal{R},k,b)$ the smallest number of rounds in which Maker can win the game, and we change the subscripts to $BM$ if Breaker is the first player.

Inductively we will prove the following more general statement.

\begin{claim}\label{clm:paths.induction}
For all families $\mathcal{Q}=(Q_1,\ldots,Q_{\ell})$ and
$\mathcal{R}=(R_1,\ldots,R_{\ell})$ of 
vertex-disjoint paths such that $\mathcal{Q} \subset \mathcal{R}$, 
and for all integers $k,b$ we have
$$
\gamma_{MB}(\mathcal{Q},\mathcal{R},k,b)
\geq
\left\lceil \frac{v(\mathcal{Q})-1}{b(2k+1)-1} \right\rceil
~~ \text{ and } ~~
\gamma_{BM}(\mathcal{Q},\mathcal{R},k,b)
\geq
\left\lceil \frac{v(\mathcal{Q})}{b(2k+1)-1} \right\rceil .
$$
\end{claim}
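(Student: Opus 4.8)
The plan is to describe one uniform strategy for Breaker and check that it forces the claimed number of rounds; this fits the announced induction on $v(\mathcal{Q})$ but is at bottom a counting argument. Write $N:=b(2k+1)-1$. Breaker plays the \emph{anchor strategy}: on each of his turns he keeps working on a single path $R_i$ that still has an undominated vertex of $Q_i$, he lets $v$ be the left-most undominated vertex of $Q_i$, and he claims the right-most still-free vertex of $R_i$ within distance $k$ of $v$; he repeats with the same $v$ (claiming a new such vertex each time) until $v$ gets dominated, then passes to the next left-most undominated vertex of $Q_i$, switching to another path only once $Q_i$ is completely dominated. (If at some moment the left-most undominated $v$ has no free vertex of $R_i$ within distance $k$, then $v$ can never be dominated, so Maker has already lost and the bound is trivial; we ignore this case.)

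The engine of the argument is a local estimate. A claimed vertex is within distance $k$ of at most $2k+1$ vertices of a path, so one full Maker move dominates at most $b(2k+1)=N+1$ new vertices of $\mathcal{Q}$. But if Breaker has anchored the left-most undominated vertex $v$ of $Q_i$ exactly $a$ times before the Maker move that finally dominates $v$, then — since $v$ being undominated means no Maker vertex is within distance $k$ of it — Breaker's $a$ anchor moves occupy the $a$ right-most vertices of $R_i$ within distance $k$ of $v$, so the vertex Maker uses to dominate $v$ lies at least $a$ positions to the left of the far end of $v$'s ball; its ball therefore covers at most $(2k+1)-a$ vertices of $Q_i$ at positions $\ge\operatorname{pos}(v)$, while all vertices of $Q_i$ at positions $<\operatorname{pos}(v)$ are already dominated, so this Maker move adds at most $(N+1)-a$ newly dominated vertices of $\mathcal{Q}$. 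Each anchor move is spent on exactly one $v$ and is ``collected'' by the move that dominates that $v$; since Breaker only ever anchors a vertex that is at that moment left-most undominated, the anchor counts $a$ attached to distinct $v$'s are disjoint, so the deficits $a$ collected over a play of $t$ rounds sum to the total number of Breaker moves. Hence the number of vertices of $\mathcal{Q}$ dominated after $t$ rounds is at most $t(N+1)$ minus the number of Breaker moves — namely $\le tN+1$ in the $D_{MB}$-game (Breaker has moved $t-1$ times) and $\le tN$ in the $D_{BM}$-game (Breaker has moved $t$ times). As Maker wins only when all $v(\mathcal{Q})$ vertices are dominated, this yields $t\ge\lceil(v(\mathcal{Q})-1)/N\rceil$ and $t\ge\lceil v(\mathcal{Q})/N\rceil$ respectively.

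To organise this as the stated induction, one would: in a $D_{MB}$-instance, let Maker move once (dominating $\le N+1$ vertices) and pass to the $D_{BM}$-instance on the family $\mathcal{Q}'$ of still-undominated subpaths with their radius-$k$ boards, which are pairwise vertex-disjoint paths because every maximal dominated run contains a claimed vertex and hence has $\ge 2k+1$ vertices; and in a $D_{BM}$-instance, let Breaker make one anchor move and pass to a smaller $D_{MB}$-instance. The part I expect to take the most care is the ``collection'' accounting: one has to be sure each Breaker anchor move yields a ``$-1$'' that is never double-counted, the subtleties being that the left-most undominated vertex wanders as play proceeds, that a Maker move dominating $v$ may use several of its $b$ picks near $v$ (but only one of them is constrained by the anchors), and that anchors placed while Breaker was working on an earlier vertex may also lie near the current $v$ — which only helps Breaker but must be handled without over-claiming. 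The degenerate cases ($Q_i$ shorter than $2k$, subpaths meeting an end of $R_i$, boards with fewer than $k$ vertices) are routine but should be checked.
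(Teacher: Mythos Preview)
Your anchor strategy is exactly the Breaker strategy the paper uses: in its Case~2 the paper also has Breaker claim $v_{k+1},v_k,v_{k-1},\ldots$, i.e.\ the rightmost free vertices in the ball of the leftmost undominated vertex, and bounds the domination achieved in the first $r$ rounds by $rN$ via the same ``the responding Maker vertex is pushed $r$ positions to the left'' observation. The difference is packaging. The paper turns this into a genuine induction on $v(\mathcal{Q})$ by waiting until Maker's first response inside the ball and only then passing to the reduced family $(\mathcal{Q}',\mathcal{R}')$ of still-undominated subpaths; you instead run the strategy for the whole game and do a single amortized count, collecting each anchor on $v^{(j)}$ as a $-1$ in the round where $v^{(j)}$ finally gets dominated. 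Your amortization is correct --- the key point, that just before that round the rightmost $a_j$ positions of the ball of $v^{(j)}$ are all Breaker's (possibly including some earlier anchors, which only helps), so the dominating Maker vertex sits at position $\le p_j+k-a_j$ and covers at most $2k+1-a_j$ positions $\ge p_j$, goes through as you describe --- and it has the mild bonus of avoiding the paper's separate treatment of the case where every $Q_i$ has at most $2k$ vertices.

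One caveat: the one-round-at-a-time induction you sketch at the end (``in a $D_{BM}$-instance, let Breaker make one anchor move and pass to a smaller $D_{MB}$-instance'') does not work as written, because a single anchor does not decrease $v(\mathcal{Q})$, so there is nothing to induct on; the credit from that anchor is only cashed in later when $v$ gets dominated, which is exactly what your amortized count captures but a per-round reduction loses. If you want an inductive version, follow the paper's ``wait for Maker to respond, then reduce'' scheme instead. But this is cosmetic --- your counting argument stands on its own and does not need the induction wrapper.
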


Note that by setting $\mathcal{Q}=\mathcal{R}=\{P_n\}$ in the above claim, the desired bounds
for the Maker-Breaker domination game on $P_n^k$ follow.

\begin{proof}
We apply induction on $V(\mathcal{Q})$
and note that the claim is true if $v(\mathcal{Q})\leq N$,
since $\gamma_{MB}(\mathcal{Q},\mathcal{R},k,b)\geq 1$ and $\gamma_{BM}(\mathcal{Q},\mathcal{R},k,b)\geq 1$ hold trivially.
For the induction step, we separately consider the cases
that Maker starts the game or Breaker is the first player.

\underline{Case 1: Maker starts.}
In the first round Maker claims up to $b$ vertices.
Afterwards, we can colour these vertices red,
and use colour blue for all other vertices which have distance at most $k$ to a red vertex.
Then we let $\mathcal{Q}'=(Q_1',\ldots,Q_{\ell'}')$ be a family consisting of all maximal paths induced by the yet uncoloured vertices, and note that
$v(\mathcal{Q}')\geq v(\mathcal{Q})-b(2k+1) = v(\mathcal{Q}) - N - 1$.
Moreover, we define the family 
$\mathcal{R}'=(R_1',\ldots,R_{\ell'}')$ as follows: 
we extend each path $Q_i'$ to a path $R_i'$ by adding back all the blue vertices from $V(\mathcal{R})$ that are connected to $Q_i$ even after all red vertices are deleted. Note that the blue vertices in a path $R_i'$ have distance larger than $k$ to every path $Q_j'$ with $j\neq i$.
Hence, Breaker can consider to play the game $D_{BM}(\mathcal{Q}',\mathcal{R}',k,b)$ and play a strategy which
prevents that Maker wins within less than 
$\gamma_{BM}(\mathcal{Q}',\mathcal{R}',k,b)$ further rounds.
By induction, we thus obtain
$$
\gamma_{MB}(\mathcal{Q},\mathcal{R},k,b) 
\geq 
1 + \gamma_{BM}(\mathcal{Q}',\mathcal{R}',k,b)
\geq
1 + \left\lceil \frac{v(\mathcal{Q}')}{N} \right\rceil
\geq 
\left\lceil \frac{v(\mathcal{Q}')+N}{N} \right\rceil
\geq 
\left\lceil \frac{v(\mathcal{Q}) - 1}{b(2k+1)-1} \right\rceil .
$$

\underline{Case 2: Breaker starts.}
If all paths in $\mathcal{Q}$ have at most $2k$ vertices,
then the number of paths in $\mathcal{Q}$ is at least $\frac{v(\mathcal{Q})}{2k}$ and thus
$\gamma_{MB}(\mathcal{Q},\mathcal{R},k,b) 
\geq \frac{v(\mathcal{Q})}{2bk}
\geq \frac{v(\mathcal{Q})}{N}$.
Hence, from now on we can assume that $\mathcal{Q}$
contains a path, say $Q_1$, of length at least $2k+1$.
Label the vertices of $Q_1$ with $v_1,v_2,\ldots,v_{s}$
such that $s\geq 2k+1$,
and label the additional vertices of $R_1$ with
$\ldots,v_{-1},v_0$ (vertices attached to $v_1$)
and $v_{s+1},v_{s+2},\ldots$ (vertices attached to $v_s$).

In the first round Breaker claims $v_{k+1}$.
Afterwards and as long as Maker does not claim
a vertex $v_i$ with $i\leq k$, Breaker claims the vertices
$v_k,v_{k-1},v_{k-2},\ldots$ in this order.
If Maker never claims a vertex $v_i$ with $-k+1\leq i\leq k$,
she looses since she will then not manage to occupy a vertex of distance at most $k$ to the vertex $v_1$.
So assume that for some $r$, round $r$ is the first round
in which Maker claims a vertex $v_i$ with $-k+1\leq i\leq k$.
At that moment, Breaker has already claimed all vertices
$v_{k-r+2},\ldots,v_{k+1}$, and hence $i\leq k-r+1$.
So, for $v_i$ there can be at most $2k-r+1$ vertices in $V(\mathcal{Q})$ of distance at most $k$ to $v_i$.
In particular, it follows that after Maker's $r^{\text{th}}$ move, 
since for every other Maker's vertex there can be at most $2k+1$ vertices of distance at most $k$,
there are in total at most
$$(2k+1-r) + (rb-1)(2k+1) = r(b(2k+1)-1)=rN$$
vertices in $\mathcal{Q}$ which have distance at most $k$ to a Maker's vertex.

Analogously to Case 1, but now after round $r$,
we define red vertices (claimed by Maker) and blue vertices (distance at most $k$ to a red vertex), and using these colours,
we define $\mathcal{Q}'$ and $\mathcal{R}'$ as before.
By the previous estimate, we have
$v(\mathcal{Q}')\geq v(\mathcal{Q})-rN$, and $r$ rounds were played already.
Now, Breaker is the next player and w.l.o.g.~he can assume that
all his previously claimed elements are free again
so that he can consider to play the game
$D_{BM}(\mathcal{Q}',\mathcal{R}',k,b)$ from now on.
Applying induction analogously to Case 1,
we can conclude
$$
\gamma_{MB}(\mathcal{Q},\mathcal{R},k,b) 
\geq 
r + \gamma_{BM}(\mathcal{Q}',\mathcal{R}',k,b)
\geq
r + \left\lceil \frac{v(\mathcal{Q}')}{N} \right\rceil
=
\left\lceil \frac{v(\mathcal{Q}')+rN}{N} \right\rceil
\geq 
\left\lceil \frac{v(\mathcal{Q})}{b(2k+1)-1} \right\rceil .
$$

\medskip

\textbf{Bounds for $C_n^k$.} 
When $n\leq b(2k+1)$, then Dominator can dominate all vertices
within the first round, and thus $\gamma_{MB}(C_n^k,b)=1$.
Moreover, for every $n$ we have
$P_n^k\subset C_n^k$, from which it immediately follows that
$\gamma_{MB}(C_n^k)\leq \gamma_{MB}(P_n^k) = \left\lceil \frac{n-1}{b(2k+1)-1} \right\rceil$.
Hence, it is enough to give a strategy for Staller that prevents
Maker from winning the $(b:1)$ Maker-Breaker domination game on $C_n^k$ within less than $\left\lceil \frac{n-1}{b(2k+1)-1} \right\rceil$ rounds, when $n>b(2k+1)$.

After Maker's first move, at most $b(2k+1)$ are dominated.
As before, colour all claimed vertices red, and colour all other dominated vertices blue. Looking only at the underlying path $P_n$, the uncoloured vertices induce a family of paths $\mathcal{Q}'$ with $v(\mathcal{Q}')\geq n-b(2k+1)$, 
and using the blue vertices attached to these paths, 
we obtain a family $\mathcal{R}'$ with $\mathcal{Q}'\subset \mathcal{R}'$. Now, the game can be reduced to $D_{BM}(\mathcal{Q}',\mathcal{R}',k,b)$, and we already know from Claim~\ref{clm:paths.induction} that 
$\gamma_{BM}(\mathcal{Q}',\mathcal{R}',k,b)\geq 
  \left\lceil \frac{n-b(2k+1)}{b(2k+1)-1} \right\rceil$.
Hence, Staller can play in such a way that Dominator needs at least $\left\lceil \frac{n-b(2k+1)}{b(2k+1)-1} \right\rceil$
further rounds for winning on $C_n^k$, leading to a total of at least $1+\left\lceil \frac{n-b(2k+1)}{b(2k+1)-1} \right\rceil = \left\lceil \frac{n-1}{b(2k+1)-1} \right\rceil $ rounds.
\end{proof}

\bigskip


\section{Trees}\label{sec:trees}

\subsection{Characterization}\label{sec:characterize}
As a first step we want to prove Theorem~\ref{thm:tree.characterization}
by induction.
In the induction step we then have to deal with forests
from which some set of vertices $A$
is already claimed by Dominator, and in particular $N[A]$ is dominated. We therefore extend the definition of $b$-goodness, as given immediately before Theorem~\ref{thm:tree.characterization}, in such a way that it considers such a set $A$. Specifically, we will use the following notions. Recall that $L(F)$ denotes the set of leaves of a given forest $F$.

\begin{definition}
Let $T$ be a tree, let $A\subset V(T)$, let $b\in\mathbb{N}$, and  let $\mathbf{v}=(v_1,v_2,\ldots,v_t)$ be a sequence
of distinct vertices in $V(T)\setminus A$. 
Then we define a forest $T_{\downarrow \mathbf{v},A}$
as follows:
Set $F_0:=T$ and  
$F_i := F_{i-1} - (\{v_i\}\cup (N(v_i)\cap L(F_{i-1})\setminus A))$
for every $i\in [t]$.
Finally, let $T_{\downarrow \mathbf{v},A} := F_t$.
Moreover, we define the following:
\begin{itemize}
\item We say $\mathbf{v}$ is $(A,b)$-admissible for $T$ if 
$|N(v_i)\cap L(T_{\downarrow (v_1,\ldots,v_{i-1}),A})\setminus A|=b$ for all $i\in[t]$.
\item For $u\notin A$, we say that the sequence 
$(\mathbf{v}|u)=(v_1,\ldots,v_t,u)$ 
is $(A,b)$-problematic for $T$ if 
$\mathbf{v}$ is $(A,b)$-admissible for $T$ and
$|N(u)\cap L(T_{\downarrow \mathbf{v},A})\setminus A|\geq b+1$.
\item We say that $T$ is $(A,b)$-good if there
is no $(A,b)$-problematic sequence for $T$.
\end{itemize}
\end{definition}

Note that $b$-goodness now is the same as $(\varnothing,b)$-goodness. Hence, Theorem~\ref{thm:tree.characterization}
follows from the following slightly
more general theorem by setting $A=\varnothing$.

\begin{theorem}\label{thm:tree.A-b-good}
Let $T$ be a tree with $v(T)\geq 2$ and $A\subset V(T)$, then the following equivalence holds:
\begin{align*}
\left.
\begin{array}{c}
\text{When Staller starts, Dominator has a strategy to dominate all}\\ 
\text{vertices of $V(T)\setminus N[A]$ in the $(b:1)$ game on $T$} \\
\text{by claiming vertices in $V(T)\setminus A$ only.}
\end{array} \right\}
~ \Leftrightarrow ~
\text{$T$ is $(A,b)$-good.}
\end{align*}
\end{theorem}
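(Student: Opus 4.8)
The plan is to establish the two implications separately, after strengthening the statement from trees to forests (the induction below destroys connectedness) and carrying the auxiliary set $A$ along; the same scheme will handle the Dominator-first variant in parallel, yielding the corresponding characterisation mentioned in the text.

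The implication ``Dominator wins $\Rightarrow$ $T$ is $(A,b)$-good'' I would prove in contrapositive form by an explicit Staller strategy; this is the easy half. Suppose $(\mathbf v\,|\,u)=(v_1,\dots,v_t,u)$ is $(A,b)$-problematic for $T$, and write $F_i := T_{\downarrow (v_1,\dots,v_i),A}$. Staller claims $v_1,\dots,v_t$ in her first $t$ rounds and then $u$. The structural point is that, by the way the $F_i$ are built (removing only $v_i$ and leaves incident to $v_i$), every leaf of $F_{i-1}$ incident to $v_i$ has all of its $T$-neighbours inside $\{v_1,\dots,v_i\}$, and every leaf of $F_t$ incident to $u$ has all of its $T$-neighbours inside $\{v_1,\dots,v_t,u\}$. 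Hence, once Staller owns $v_1,\dots,v_i$, the $b$ leaves at $v_i$ in $F_{i-1}$ can be dominated only by Dominator claiming them herself, so she must spend her entire round on exactly these $b$ leaves — otherwise Staller grabs a leftover leaf next round and wins. Consequently the forests stay pristine, the bookkeeping iterates, and after round $t+1$ the $\ge b+1$ leaves at $u$ in $F_t$ all demand a Dominator move while only $b$ are available; Staller claims one of the rest, and that vertex, which lies in $V(T)\setminus N[A]$, is never dominated.

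For the converse I would induct on $v(T)-|A|$. Assume $T$ is $(A,b)$-good and that Staller opens with some vertex $s$. Dominator answers by claiming, first, every \emph{forced} vertex — one she must secure at once to keep all of $V(T)\setminus N[A]$ dominable (if $s$ is the stem of undominated leaves these leaves become forced; if $s$ is an undominated leaf its stem becomes forced); second, every \emph{dangerous} vertex, meaning a vertex $y$ joined to $s$ through a degree-two vertex and already carrying exactly $b$ relevant leaves, so that pruning $s$ would push it to $b+1$; and third, if moves remain, one vertex realising a step of the reduction that witnesses goodness, to guarantee progress. One must then check that the total number of these claims is at most $b$, that after the reply the position is equivalent to the same game (Staller to move) on the smaller forest $T'$ obtained by pruning $s$ together with the now-settled vertices for an enlarged set $A'$, and that $T'$ is again $(A',b)$-good; granting this, the induction hypothesis finishes the game on $T'$.

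The crux — and the expected main obstacle — is that a single round cannot destroy $(A,b)$-goodness, together with the budget count it rests on. Here I would use two facts. First, in a tree the deletion of one vertex creates at most one new leaf incident to any fixed other vertex, so pruning $s$ can raise the leaf-degree of each ``neighbour of a degree-two neighbour of $s$'' by at most one. Second, and this is the key estimate, if $s$ has $j$ relevant leaves then at most $b-j$ of those vertices can be dangerous: given $k$ of them $y_1,\dots,y_k$, the sequence $(y_1,\dots,y_k,s)$ turns out to be $(A,b)$-admissible for $T$ and, in the resulting residual forest, $s$ carries $j+k$ relevant leaves, so $(A,b)$-goodness forces $j+k\le b$. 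This bound is exactly what makes the budget $b$ suffice, and — replayed so as to pull any hypothetical $(A',b)$-problematic sequence for $T'$ back to one for $T$ — it is also what yields the preservation of goodness under one round. Everything else should reduce to a finite, if lengthy, case analysis over the possible roles of $s$ and the way the pruning cascades.
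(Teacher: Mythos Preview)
Your contrapositive argument for ``$\Rightarrow$'' is correct and is essentially the paper's.

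For ``$\Leftarrow$'' your plan diverges from the paper's, and there is a genuine gap. The paper does not prune $s$ and reduce globally. After Staller claims $w$ it splits $T-w$ into components $T_1,\dots,T_\ell$, with $x_i$ the neighbour of $w$ in $T_i$ and $A_i=A\cap V(T_i)$, and proves two structural facts: (Claim~1) if $T_i$ is not $(A_i,b)$-good, there is an $(A,b)$-admissible sequence for $T$, contained in $V(T_i)$, after which $x_i$ becomes a leaf of $w$ outside $A$; and (Claim~2) every $T_i$ is $(A_i\cup\{x_i\},b)$-good. Concatenating the sequences from Claim~1 shows that at most $b$ components are bad, so Dominator claims the corresponding $x_i$'s (and always $x_1$, which in particular dominates $w$), and then inducts on each component separately via Claim~2. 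The real work is Claim~1, whose proof needs a minimality argument with an auxiliary digraph to handle the case where every problematic sequence for $T_i$ deletes $x_i$ as a leaf along the way.

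Your ``dangerous'' vertices capture only the length-$1$ instance of this phenomenon: a component $T_i$ can fail to be $(A_i,b)$-good because of a long admissible sequence deep inside it, with nothing at distance $2$ from $s$ carrying $b$ leaves. For instance, with $b=2$, attach to $w$ a path $x_1\text{--}a\text{--}e\text{--}f$ with one pendant leaf at $a$ and two pendant leaves at $f$, and give $w$ a second non-leaf neighbour so that $w$ is neither a leaf nor the stem of a leaf: then $T$ is $(\varnothing,2)$-good, yet the component $T_1$ is not $(\varnothing,2)$-good (via $(a\,|\,f)$), while $a$ has only one leaf in $T$ and is therefore not dangerous by your definition. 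A separate issue: when $s$ has no leaf neighbours and is not itself a leaf, your response may spend all $b$ moves at distance $2$ and never claim a neighbour of $s$; ``pruning $s$'' then silently drops the obligation to dominate $s$, which the paper avoids by always taking $x_1$. The step you yourself flag as the crux --- that one round of your strategy preserves $(A,b)$-goodness and yields an equivalent smaller game --- is exactly where the difficulty lives, and the dangerous-vertex count does not cover it; the paper's component decomposition together with Claim~1 is what makes the induction close.
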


\begin{proof}[Proof of Theorem~\ref{thm:tree.A-b-good}] 

\textbf{Proof of "$\Rightarrow$":}
We do a proof by contraposition.
Assume that $T$ is not $(A,b)$-good. 
Then, by definition, there is an $(A,b)$-problematic sequence
$(\mathbf{v}|u) = (v_1,v_2,\ldots,v_t,u)$ for $T$.
Note that by definition $v_1,v_2,\ldots,v_t,u\notin A$.
Staller can play as follows:
In the first round, 
Staller claims $v_1$, and Dominator is forced
to claim all the $b$ vertices of 
$N(v_1)\cap L(T)\setminus A$,
as otherwise Staller could block one of these vertices
immediately within his next move.
(For this note that all these $b$ vertices
belong to $V(T)\setminus N[A]$, since their only neighbour is $v_1\notin A$.)
Afterwards note that no vertex of 
$V(T_{\downarrow (v_1),A})\setminus A$ is dominated yet. Then Staller claims $v_2$, and analogously Dominator is forced to claim the $b$ vertices of 
$N(v_2)\cap L(T_{\downarrow (v_1),A})\setminus A$.
Staller continues like this and claims $v_3,\ldots,v_t,u$
in the next rounds. After $v_i$ is claimed for $i\in [t]$,
Dominator is always forced to claim the $b$ vertices of
$N(v_i)\cap L(T_{\downarrow (v_1,\ldots,v_{i-1}),A})\setminus A$.
But then, immediately after $u$ is claimed by Staller,
Dominator cannot claim all the at least $b+1$ vertices
of 
$N(u)\cap L(T_{\downarrow (v_1,\ldots,v_i),A})\setminus A$. 
Then Staller can
claim one of these vertices, say $z$, in the next round, and Dominator can never dominate this particular vertex $z$.
By choice of $z$, it is clear that $z\notin A$.
Moreover, since $N(z)\subset N(\{v_1,v_2,\ldots,v_t,u\})$,
we also have $z\notin N(A)$. Hence, Dominator does not manage
to dominate all of $V(T)\setminus N[A]$.

\bigskip

\textbf{Proof of "$\Leftarrow$":}
We prove this direction by strong induction on $v(T)$.
The base case can be checked easily. So, for the induction step ($\leq k-1 \rightarrow k$),
consider any tree $T$ with $v(T)=k$
and any set $A\subset V(T)$ such that $T$ is $(A,b)$-good.
The goal is to show that
Dominator has a strategy to dominate all vertices of
$V(T)\setminus N[A]$ in the $(b:1)$ game on $T$
by only claiming vertices in $V(T)\setminus A$ and
when Staller is the first player.

Assume that in the first round, Staller claims
some vertex $w$.
Let $T_1,T_2,\ldots,T_{\ell}$ be the components
of $T-w$.
Moreover, for each $i\in [\ell]$, 
let $x_i$ be the unique neighbour of $w$ in $T_i$,
and let $A_i = V(T_i)\cap A$. 
We claim that the following statements hold.

\begin{claim}\label{claim:A-b-good.claim1}
Let $i\in [\ell]$.
Assume $T_i$ is not 
$(A_i,b)$-good.
Then there exists an
$(A,b)$-admissible sequence
$\mathbf{w}$ for $T$,
contained in $V(T_i)$,
such that
$x_i\in N(w) \cap L(T_{\downarrow \mathbf{w},A})\setminus A$.
\end{claim}

\begin{claim}\label{claim:A-b-good.claim2}
For every $i\in [t]$, 
$T_i$ is $(A_i\cup \{x_i\},b)$-good.
\end{claim}

Before we prove the above claims, let us explain first
how the induction can be completed.

By Claim~\ref{claim:A-b-good.claim1} it follows that
at most $b$ components $T_i$ are not
$(A_i,b)$-good. Indeed, for contradiction assume that this was wrong, and w.l.o.g.~say that $T_i$ 
is not $(A_i,b)$-good for every $i\in [b+1]$.
Then let $\mathbf{w}_i$ be
an $(A,b)$-admissible sequence for $T$
as promised by Claim~\ref{claim:A-b-good.claim1},
for each $i\in [b+1]$.
The concatenation 
$\mathbf{w} = \mathbf{w}_1 \circ \mathbf{w}_2 \circ \ldots \circ \mathbf{w}_{b+1}$ then is an $(A,b)$-admissible sequence for $T$
such that each of the vertices $x_i$ with $i\in [b+1]$
belongs to $N(w)\cap L(T_{\downarrow \mathbf{w},A})\setminus A$. Hence, $(\mathbf{w}|w)$ is an
$(A,b)$-problematic sequence for $T$,
in contradiction to $T$ being $(A,b)$-good.

In particular, by possibly reordering the components $T_i$, we can assume that there is some $r\in \{0,1,\ldots,b\}$
such that $T_i$ is $(A_i,b)$-good if and only if $i > r$.
Dominator's strategy for the first round then is to claim $x_1$ (even if $T_1$ is already $(A_1,b)$-good) and then claim the vertices
$x_2,x_3,\ldots,x_{r}$ (and skip his remaining elements if $r<b$). Note that this is allowed since
by Claim~\ref{claim:A-b-good.claim1} we have
$x_1,x_2,\ldots,x_{r}\notin A$.
 
For the next rounds, Dominator always plays on the same component $T_i$
on which Staller made his previous move.
By induction hypothesis and using Claim~\ref{claim:A-b-good.claim2}, for every $i\in [r]$,
Dominator can dominate in $T_i$ every vertex of $V(T_i)\setminus N[A_i\cup \{x_i\}]$
without claiming elements of $A_i\cup \{x_i\}$.
Together with the element $x_i$ from the first round,
Dominator thus manages to dominate
$V(T_i)\setminus N[A]$.
Similarly, for every $i>r$, since $T_i$ is $(A_i,b)$-good,
Dominator can dominate in $T_i$ every vertex of $V(T_i)\setminus N[A]$ without claiming vertices in $A_i$. As $w$ is dominated by $x_1$, Dominator hence dominates $V(T)\setminus N[A]$ completely.

\bigskip

Hence, it remains to prove Claim~\ref{claim:A-b-good.claim1}
and Claim~\ref{claim:A-b-good.claim2}.

\medskip

\textit{Proof of Claim~\ref{claim:A-b-good.claim1}:}
By assumption
there exists an $(A_i,b)$-problematic sequence 
$(\mathbf{v}|u)$ for $T_i$.

\medskip

Case 1: First, We consider the case that
there exists an $(A_i,b)$-problematic sequence 
$(\mathbf{v}|u)$ for $T_i$
such that
$x_i\in V(T_{\downarrow \mathbf{v},A_i})$.
Then $\mathbf{v}$ is an $(A,b)$-admissible sequence 
for $T$. However, since $T$ is $(A,b)$-good,
we know that $(\mathbf{v}|u)$ is not 
$(A,b)$-problematic in $T$.
Therefore, $u$ has
at most $b$ neighbours in $L(T_{\downarrow \mathbf{v},A})\setminus A$. But then, since
$u$ has at least $b+1$ neighbours in 
$L((T_i)_{\downarrow \mathbf{v},A_i})\setminus A_i$,
and since
$L((T_i)_{\downarrow \mathbf{v},A_i})\setminus A_i
\subset 
(L(T_{\downarrow \mathbf{v},A})\setminus A) \cup \{x_i\}$
holds, 
it immediately follows that
$N(u) \cap L((T_i)_{\downarrow \mathbf{v},A_i})\setminus A_i$
has size $b+1$ and contains $x_i$,
while
$|N(u)\cap L(T_{\downarrow \mathbf{v},A})\setminus A|=b$.
Now, setting $\mathbf{w}=(\mathbf{v}|u)$,
we see that $\mathbf{w}$ is an
$(A,b)$-admissible sequence for $T$,
contained in $V(T_i)$,
such that
$x_i\in N(w) \cap L(T_{\downarrow \mathbf{w},A})\setminus A$.

\medskip

Case 2: We next consider the case that 
$x_i\notin V(T_{\downarrow \mathbf{v},A_i})$
for every
$(A_i,b)$-problematic sequence 
$(\mathbf{v}|u)=(v_1,\ldots,v_t,u)$ for $T_i$.

We first observe that for every such
sequence, we have $x_i \notin \{v_1,\ldots,v_t,u\}$,
since otherwise $\mathbf{v}$ would be an
$(A,b)$-admissible sequence for $T$
with $|N(u)\cap L(T_{\downarrow \mathbf{v},A})\setminus A|\geq b+1$, contradicting that $T$ is $(A,b)$-good.
Hence, since $x_i\notin V(T_{\downarrow \mathbf{v},A_i})$, there must be some $j\in [t]$ such that 
$x_i\in N(v_j)\cap 
L(T_{\downarrow (v_1,\ldots,v_{j-1})A_i})\setminus A_i$.
Among all possible
$(A_i,b)$-problematic sequences for $T_i$,
we now choose $(\mathbf{v}|u)$
such that $k:=t-j$ is minimal.

Having $(\mathbf{v}|u)$ fixed, 
we define an auxiliary digraph $D_{\mathbf{v}}$ 
on $\{v_j,\ldots,v_t,u\}$ as follows: 
For simplicity write $v_{t+1}:=u$,
and for every $j\leq j_1<j_2 \leq t+1$ 
we put a directed edge $(v_{j_1},v_{j_2})$ in $D_{\mathbf{v}}$
if and only if
$$|N(v_{j_2})\cap L(T_{\downarrow (v_1,\ldots,v_{j_1}),A_i})\setminus A_i|
>
|N(v_{j_2})\cap L(T_{\downarrow (v_1,\ldots,v_{j_1-1}),A_i})\setminus A_i|.$$
This means, roughly speaking, that in the step when $v_{j_1}$ and its $b$ leaf-neighbours are deleted, the vertex $v_{j_2}$
receives a new leaf-neighbour.
Note that this can only happen, if there is a (unique) path
$(v_{j_1},v,v_{j_2})$ in 
$L(T_{\downarrow (v_1,\ldots,v_{j_1-1}),A_i})$
and the deletion of $v_{j_1}$ makes $v$ a leaf.

The underlying (undirected) graph of $D_{\mathbf{v}}$
is a forest, and by construction, in $D_{\mathbf{v}}$
every directed edge goes from a vertex of smaller index to a vertex of higher index. We moreover observe the following three simple properties:
\begin{enumerate}
\item[(a)] The vertex $u$ has an ingoing edge.

\noindent
Indeed, if this was not the case, then
$|N(u)\cap L(T_{\downarrow (v_1,\ldots,v_{j-1}),A_i})\setminus A_i|\geq b+1$. Let $\mathbf{v}'$ be obtained from $\mathbf{v}$ by deleting $v_j$.
Then $(\mathbf{v}'|u)$
would be an $(A_i,b)$-problematic sequence 
for $T_i$ such that
$x_i\in V(T_{\downarrow \mathbf{v}',A_i})$,
in contradiction to the assumption of Case 2.

\item[(b)]
No other vertex than $u$ has out-degree $0$.

\noindent
Indeed, assume instead that we could find
a vertex $v_{\ell}\neq u$ with out-degree $0$.
Let $\mathbf{v}'$ be obtained from $\mathbf{v}$ by deleting $v_{\ell}$.
Then $(\mathbf{v}'|u)$
would be an $(A_i,b)$-problematic sequence 
for $T_i$, contradicting the minimality of the parameter $k$.

\item[(c)]
No other vertex than $v_j$ has in-degree $0$.

\noindent
Indeed, assume instead that we could find
a vertex $v_{\ell}\neq v_j$ with in-degree $0$.
Then the sequence
$(v_1,\ldots,v_{j-1},v_{\ell},v_j,\ldots,v_{\ell -1},v_{\ell +1},v_t|u)$
would be an $(A_i,b)$-problematic sequence 
for $T_i$, contradicting the minimality of the parameter $k$.
\end{enumerate}

From (a)--(c) it follows that
$D_{\mathbf{v}}$ is the directed path
$(v_j,\ldots,v_{t+1})$. Since therefore
every vertex in this path except from $v_j$ 
has exactly one ingoing edge
and since $(\mathbf{v}|u)$ is
$(A_i,b)$-problematic for $T_i$, 
we conclude  that
\begin{align*}
& |N(v_{\ell})\cap L(T_{\downarrow (v_1,\ldots,v_{j-1}),A_i})\setminus A_i|=b-1 ~~
\text{for every }j<\ell\leq t\, ,\\
& |N(u)\cap L(T_{\downarrow (v_1,\ldots,v_{j-1}),A_i})\setminus A_i|=b\, ,
\end{align*}
and that in $T_{\downarrow (v_1,\ldots,v_{j-1}),A_i}$
for every $j<\ell\leq t+1$ there must be a (unique) vertex $w_{\ell}$ of degree 2
such that $(v_{\ell-1},w_{\ell},v_{\ell})$ forms a path.
But then setting
$\mathbf{w}:=(v_1,\ldots,v_{j-1},u,v_t,\ldots,v_{j+1})$, we have $(\mathbf{w} | v_j)$ is an $(A,b)$-problematic sequence for $T$, contained in $V(T_i)$, such that $x_i\in N(w) \cap L(T_{\downarrow \mathbf{w},A})\setminus A$.\hfill $\Box$.

\bigskip

\textit{Proof of Claim~\ref{claim:A-b-good.claim2}:}
Assume the statement is wrong.
Then we could find
an $(A_i\cup \{x_i\},b)$-problematic sequence 
$(\mathbf{v}|u)$ for $T_i$.
This sequence would then also be
$(A,b)$-problematic for $T$,
in contradiction to $T$ being $(A,b)$-good.
\hfill $\Box$

\bigskip

This completes the proof of Theorem~\ref{thm:tree.A-b-good}.
\end{proof}

While Theorem~\ref{thm:tree.characterization} is a direct consequence of the just proven theorem, we can also conclude the following variant when Dominator is the first player.

\begin{corollary}\label{cor:tree.A-b-good_D.starts}
Let $T$ be a tree. When Dominator starts, then 
Dominator wins the $b$-biased Maker-Breaker domination game on $T$
if and only if there exists $A\subset V(T)$ of size at most $b$ such that $T$ is $(A,b)$-good.
\end{corollary}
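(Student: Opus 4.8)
The plan is to reduce Corollary~\ref{cor:tree.A-b-good_D.starts} to Theorem~\ref{thm:tree.A-b-good} by analysing Dominator's first move. If Dominator is the first player, her opening move consists of claiming a set $A\subset V(T)$ with $|A|\leq b$. After this move, the remaining game is exactly the $(b:1)$ game on $T$ with Staller to move first, where Dominator has already claimed $A$ and must dominate all of $V(T)\setminus N[A]$ using only vertices outside $A$ — which is precisely the situation characterised by Theorem~\ref{thm:tree.A-b-good}. So the two directions should fall out as follows.

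For the ``if'' direction, suppose there exists $A\subset V(T)$ with $|A|\leq b$ such that $T$ is $(A,b)$-good. Dominator claims $A$ (and wastes her remaining up to $b-|A|$ moves if $|A|<b$) in the first round. From then on Staller moves first in the $(b:1)$ game on $T$ with $A$ already secured, and by the ``$\Leftarrow$'' part of Theorem~\ref{thm:tree.A-b-good} Dominator has a strategy to dominate every vertex of $V(T)\setminus N[A]$ by claiming only vertices in $V(T)\setminus A$. Together with $A$ itself, the vertices Dominator ends up owning form a superset of $N[A]\cup(V(T)\setminus N[A])=V(T)$'s... more precisely they dominate all of $V(T)$: vertices in $N[A]$ are dominated by $A$, the rest by the strategy. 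Hence Dominator wins. One small point to note in passing: we may assume the first move really does claim $b$ distinct free vertices when $v(T)\geq b$, and when $v(T)<b$ Dominator claims everything and trivially wins; choosing $A=V(T)$, one checks $T$ is $(V(T),b)$-good since $V(T)\setminus A=\varnothing$, so the equivalence is consistent in this degenerate range too.

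For the ``only if'' direction, suppose Dominator wins when she starts, and let $A$ be the set of (at most $b$) vertices she claims in her first round under a winning strategy. Since Dominator wins, in particular she dominates every vertex of $V(T)\setminus N[A]$, and she does so by claiming vertices outside $A$ in the subsequent rounds, with Staller moving first in that continuation; she might also, harmlessly, re-examine or ignore vertices of $A$, but w.l.o.g.\ her later moves avoid $A$ since claiming an already-owned vertex is never useful. Thus the left-hand side of the equivalence in Theorem~\ref{thm:tree.A-b-good} holds for this $A$, so $T$ is $(A,b)$-good, and $|A|\leq b$ by the rules of the game. This gives the required set.

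The only genuine wrinkle — and the thing to state carefully rather than the ``main obstacle'' — is the bookkeeping about which vertices Dominator is ``allowed'' to claim. Theorem~\ref{thm:tree.A-b-good} is phrased with Dominator forbidden from claiming inside $A$, whereas in the real game after claiming $A$ she is merely disincentivised, not forbidden, from re-claiming those vertices. One has to observe that any winning strategy can be converted into one that never claims an already-owned vertex (replace such a redundant move by claiming an arbitrary free vertex, or skipping), which does not decrease the set of dominated vertices; symmetrically, in the ``if'' direction, a strategy that avoids $A$ is certainly a legal strategy in the full game. With that routine observation in place, both implications are immediate from Theorem~\ref{thm:tree.A-b-good}.
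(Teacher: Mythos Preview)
Your proof is correct and follows essentially the same approach as the paper: both directions are reduced to Theorem~\ref{thm:tree.A-b-good} by isolating Dominator's opening move $A$. The only cosmetic difference is that for the ``only if'' direction you argue directly (take $A$ to be the first move of a winning strategy and invoke the ``$\Rightarrow$'' part of Theorem~\ref{thm:tree.A-b-good}), whereas the paper argues by contraposition (if no suitable $A$ exists, then whatever Dominator plays first leaves $T$ not $(A,b)$-good, and Staller wins by the same theorem); these are logically equivalent, and your extra remarks about redundant moves and the degenerate case $v(T)<b$ are correct but not needed for the argument.
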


\begin{proof}
If there exists $A \subset V(T)$ of size $b$
such that $T$ is $(A,b)$-good,
then in his first move, Dominator can occupy $A$
and afterwards win according to 
Theorem~\ref{thm:tree.A-b-good}.

Otherwise, if no such $A$ exists, then no matter which 
set $A$ Dominator claims in the first round, 
$T$ is not $(A,b)$-good. Hence,
by Theorem~\ref{thm:tree.A-b-good} Staller has a strategy
to make sure that Dominator will not dominate everything
in $V(T)\setminus N[A]$.
\end{proof}

\smallskip

\subsection{Maker-Breaker domination number for trees}

For the game in which Dominator starts, note that trivially
$$1\leq \gamma_{MB}(T,b)\leq \left\lceil \frac{n}{b+1} \right\rceil$$
holds for every tree $T$ on $n$ vertices,
provided that $\gamma_{MB}(T,b)<\infty$.
At the end of this subsection, Remark~\ref{remark:D.starts} will explain why every integer in this range can be realized by some tree $T$.
By proving Theorem~\ref{thm:trees.bounds}, we will however prove a significantly larger lower bound for $\gamma_{MB}'(T,b)$, which is linear in $n$. For the proof, we will make use of the following statement which can be seen as a generalization 
of Theorem 4.4.(i) in~\cite{gledel2019maker}.

\begin{lemma}\label{lem:residue}
Let $T$ be a tree which is $b$-good for some $b\in\mathbb{N}$, and let 
$\mathbf{v}=(v_1,v_2,\ldots,v_t)$ be 
an $(\varnothing,b)$-admissible sequence for $T$.
Then
$$
\gamma_{MB}'(T,b) = \frac{v(T)-v(T_{\downarrow \mathbf{v}})}{b+1} + \gamma_{MB}'(T_{\downarrow \mathbf{v}},b).
$$
\end{lemma}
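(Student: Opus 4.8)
The plan is to prove the identity by establishing the two inequalities $\leq$ and $\geq$, each by exhibiting a strategy together with a bound on the number of rounds; the bookkeeping is driven by the observation that $|V(T)| - |V(T_{\downarrow\mathbf v})|$ is exactly $t(b+1)$, since removing $v_i$ together with its $b$ fresh leaf-neighbours deletes precisely $b+1$ vertices at each of the $t$ steps of an admissible sequence. Write $m := t = \frac{v(T)-v(T_{\downarrow\mathbf v})}{b+1}$. First I would record the easy structural facts: that $T_{\downarrow\mathbf v}$ is again $b$-good (any problematic sequence for $T_{\downarrow\mathbf v}$ would extend $\mathbf v$ to a problematic sequence for $T$, contradicting $b$-goodness of $T$), so $\gamma'_{MB}(T_{\downarrow\mathbf v},b)$ is finite and the right-hand side makes sense; and that for each $i$ the $b$ leaf-neighbours of $v_i$ in $T_{\downarrow(v_1,\dots,v_{i-1})}$ must all be claimed by Dominator in any winning play (their only neighbour is $v_i$, so Staller can kill one immediately if Dominator leaves it free), forcing Dominator to answer any Staller move near such a configuration.

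For the upper bound $\gamma'_{MB}(T,b) \le m + \gamma'_{MB}(T_{\downarrow\mathbf v},b)$, I would have Dominator play a "pairing-style" strategy on the $m$ groups $\{v_i\}\cup(N(v_i)\cap L(\cdot))$ in parallel with an optimal strategy on $T_{\downarrow\mathbf v}$. Whenever Staller touches one of the $m$ special groups — by claiming $v_i$, or one of its leaf-neighbours, or the degree-2 vertex on a path that will become such a leaf — Dominator devotes that round to finishing the dominating work inside that group (claiming $v_i$ and the at most $b$ required leaves, which is possible with one round's worth of $b$ moves once Staller has spent a move there, because Staller's interference costs Dominator at most the threatened vertex and there are only $b$ of them); when Staller plays inside $T_{\downarrow\mathbf v}$, Dominator responds there according to the optimal strategy. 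Since there are $m$ groups and Staller makes one move per round, at most $m$ rounds are spent "neutralizing" groups, and the remaining rounds track the optimal game on $T_{\downarrow\mathbf v}$, giving the bound. The delicate point here is to argue that one round of $b$ Dominator-moves always suffices to finish a group after Staller has intruded — this uses that each intrusion lowers the count of free vertices Dominator still needs in that group by the amount Staller could threaten, exactly as in the admissibility bookkeeping.

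For the lower bound $\gamma'_{MB}(T,b) \ge m + \gamma'_{MB}(T_{\downarrow\mathbf v},b)$, I would give Staller a strategy forcing this many rounds: Staller first plays the sequence $v_1, v_2, \dots$, i.e. in round $i$ he claims $v_i$ (or, if already taken, the relevant degree-2 vertex making the path-threat), thereby forcing Dominator to spend that entire round claiming the $b$ leaf-neighbours of $v_i$ and nothing else useful elsewhere; after $m$ such rounds none of the vertices of $V(T_{\downarrow\mathbf v})$ is yet dominated and Dominator has claimed no vertex of $V(T_{\downarrow\mathbf v})$ except possibly "wasted" ones. Staller then continues with an optimal delaying strategy for the game on $T_{\downarrow\mathbf v}$, which by definition forces at least $\gamma'_{MB}(T_{\downarrow\mathbf v},b)$ further rounds. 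One has to be slightly careful that Dominator cannot "get ahead" by ignoring a forced move — but ignoring it loses outright by the forcing observation above, so an optimal Dominator must comply, and the counting of rounds is then exact.

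The main obstacle I expect is the upper-bound direction, specifically proving rigorously that Dominator can always clear an intruded special group in a single round while simultaneously not falling behind on $T_{\downarrow\mathbf v}$: one must handle the possibility that Staller's move is the degree-2 vertex $w_\ell$ on a path $(v_{\ell-1}, w_\ell, v_\ell)$ rather than a leaf or a $v_i$ directly, and verify that the interaction between the several groups (they may share the vertices $w_\ell$ along a path, as in the structure exposed in the proof of Claim~\ref{claim:A-b-good.claim1}) does not let Staller threaten two groups with one move. This is where the precise definition of $(\varnothing,b)$-admissibility, and the resulting tree structure of the "deletion process", must be invoked to show the groups are effectively independent from Staller's point of view.
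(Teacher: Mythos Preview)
Your approach is the same as the paper's: Staller plays $v_1,\dots,v_t$ to force the lower bound, and for the upper bound Dominator uses a pairing strategy on the groups $V_i := \{v_i\}\cup\bigl(N(v_i)\cap L(T_{\downarrow(v_1,\dots,v_{i-1}),\varnothing})\bigr)$ combined with optimal play on $T_{\downarrow\mathbf v}$. Your ``main obstacle'' is a phantom: the sets $V_i$ are pairwise disjoint by construction of the deletion process (each $V_i$ is precisely what gets removed at step~$i$, so $V(T)\setminus V(T_{\downarrow\mathbf v})=\bigcup_i V_i$ is a partition), hence there is no shared $w_\ell$ and no way for Staller to threaten two groups with one move; once you observe $|V_i|=b+1$ and that $V_i$ induces a star, the upper bound is immediate, since after Staller claims one vertex of $V_i$ Dominator simply claims the remaining $b$ and thereby dominates all of $V_i$.
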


\begin{proof}
Let $\mathbf{v}=(v_1,v_2,\ldots,v_t)$ be 
an $(\varnothing,b)$-admissible sequence for $T$,
and note that $t(b+1)=v(T)-v(T_{\downarrow \mathbf{v}})$.

\textbf{"$\geq$":}
In the first $t$ rounds, Staller can claim the
vertices $v_1,v_2,\ldots,v_t$, always forcing
Dominator to claim $b$ leaf-neighbours (as in the proof of
Theorem~\ref{thm:tree.A-b-good}).
Afterwards, the game reduces to $T_{\downarrow \mathbf{v}}$ where no vertex is claimed yet and no vertex is dominated by Dominator so far.
Hence, Staller can continue the game so that it lasts
at least $\gamma_{MB}'(T_{\downarrow \mathbf{v}},b)$ further rounds,
leading to a total of at least
$t+\gamma_{MB}'(T_{\downarrow \mathbf{v}},b)$
rounds.

\smallskip

\textbf{"$\leq$":}
For each $i\in [t]$,
let $V_i = \{v_i\} \cup (N(v_i)\cap L(T_{\downarrow (v_1,\ldots,v_{i-1}),\varnothing}))$, and note
that each $V_i$ induces a star with $b$ edges.
Now, Dominator can play according to the following strategy: Whenever Staller claims a vertex in some $V_i$,
Dominator claims all remaining vertices in $V_i$, 
and this way dominates all of $V_i$.
Whenever Staller plays on $V(T_{\downarrow \mathbf{v}})=V\setminus (\bigcup_{i\in [t]} V_i)$ and Dominator does not dominate $V(T_{\downarrow \mathbf{v}})$ yet,
Dominator plays on $T_{\downarrow \mathbf{v}}$ with the strategy that ensures to win within $\gamma_{MB}'(T_{\downarrow \mathbf{v}})$ rounds. 
Whenever Staller plays on $V(T_{\downarrow \mathbf{v}})$ while 
Dominator already dominates $V(T_{\downarrow \mathbf{v}})$,
then Dominator claims
$b$ vertices of a set $V_i$ which is not yet dominated.
Following the strategy, Dominator wins within
$t+\gamma_{MB}'(T_{\downarrow \mathbf{v}},b)$ rounds.
\end{proof}

With the above lemma in hand, we can now prove Theorem~\ref{thm:trees.bounds}.

\smallskip

\begin{proof}[Proof of Theorem~\ref{thm:trees.bounds}]
\textbf{(a)} The upper bound is trivial as the game cannot last longer than $\lceil \frac{n}{b+1} \rceil$ rounds.
Hence, we focus on the lower bound from now on.
If $\gamma(T)\geq \frac{n}{b+3}$, then the lower bound in (a) holds trivially, since Dominator has less than
$\frac{n}{b+3}$ vertices as long as 
less than $\frac{n}{b(b+3)}$ rounds are played.
Thus, from now on we can assume that
$\gamma(T) < \frac{n}{b+3}$.
In particular, there is a set $A\subset V(T)$ 
of size $|A| < \frac{n}{b+3}$
such that $A\cup N_T(A)=V(T)$.
To each $w\in N_T(A)$ assign a unique vertex
$v\in A$ such that $vw\in E(T)$;
and for each vertex $v\in A$, denote with
$d^*(v)$ the number of vertices $w\in N_T(A)$
which are assigned to $v$ in this way.
Note that
$$
\sum_{v\in A} d^*(v) = |N_T(A)| = n - |A| > \frac{(b+2)n}{b+3}.
$$
Staller's strategy is as follows:
As long as less than $\frac{|A|}{b+1}$ rounds were played so far, 
Staller claims a free vertex in $A$ for which
$d^*(v)$ is maximal.
Let $A'\subset A$ be the set claimed this way. Then
$|A'|=\lceil \frac{|A|}{b+1} \rceil$ and
$$
\sum_{v\in A'} d^*(v) \geq 
\frac{1}{b+1} \sum_{v\in A} d^*(v)
> \frac{(b+2)n}{(b+1)(b+3)}.
$$
We claim that 
$T-A'$ has at least $\frac{n}{b+3}$ components. To see this,
let $c$ be the number of these components, and
consider the tree $T'$ obtained from $T$ by contracting the components of $T-A'$. Then $v(T')=|A'|+c$ and hence
$$
e(T')=v(T')-1=|A'|+c-1 .
$$
Moreover,
$$
e(T') \geq e_T(A',V(T)\setminus A')
\geq \sum_{v\in A'} d^*(v) \geq \frac{(b+2)n}{(b+1)(b+3)}
$$
and therefore
\begin{align*}
c \geq \frac{(b+2)n}{(b+1)(b+3)} - |A'| + 1 
& > \frac{(b+2)n}{(b+1)(b+3)} - \frac{|A|}{b+1} \\
& > \frac{(b+2)n}{(b+1)(b+3)} - \frac{n}{(b+1)(b+3)}
= \frac{n}{b+3}.
\end{align*}
Now note that Dominator needs at least one vertex in each component of $T-A'$ to win the $b$-biased Maker-Breaker domination game on $T$. As she can touch at most $b$ of these components in each round, she needs to play at least $\frac{n}{b(b+3)}$ rounds.

\bigskip

\textbf{(b)} 
Let $f(b):=(\lfloor \frac{b}{2} \rfloor + 1)(\lceil \frac{b}{2} \rceil + 1)$. The main part of the proof will be the following claim.

\begin{claim}\label{claim:trees.bound.reduction}
For all integers
$k\geq 1$ and $b\geq 2$, there exists a tree $T_{k,b}$ such that
$\gamma_{MB}'(T_{k,b}) = \lceil \frac{k}{f(b)} \rceil$.
\end{claim}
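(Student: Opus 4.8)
The plan is to construct, for each $k\geq 1$ and $b\geq 2$, an explicit tree $T_{k,b}$ whose ``residue'' (after stripping all forced star-reductions) is itself a tree $T_{k,b}'$ with the property that $\gamma_{MB}'(T_{k,b}') = \lceil k/f(b)\rceil$ and $v(T_{k,b}) - v(T_{k,b}')$ is divisible by $b+1$, so that Lemma~\ref{lem:residue} reduces the whole computation to the residue. The residue tree $T_{k,b}'$ should be built out of $\lceil k/f(b)\rceil$ identical ``gadgets'', each gadget designed so that (i) Dominator needs exactly one round to finish it, and (ii) the gadgets cannot be combined, i.e.\ Dominator can handle at most one gadget per round even though her bias is $b$. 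The natural gadget is a star-like configuration: take a center vertex $c$ joined to $\lfloor b/2\rfloor + 1$ vertices, each of which in turn is joined to $\lceil b/2\rceil + 1$ leaves (or some similar two-level structure whose total relevant size is $f(b) = (\lfloor b/2\rfloor+1)(\lceil b/2\rceil+1)$). The point of the factor $f(b)$ is that within one gadget Staller can force Dominator to respond inside that gadget, and the branching is arranged so that a single round of $b$ Dominator-moves suffices to dominate one gadget but not two.

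First I would define the gadget precisely and verify that a single gadget $H$ satisfies $\gamma_{MB}'(H,b) = 1$: Dominator, playing second, must be able to dominate all of $V(H)$ with her first $b$ moves regardless of Staller's opening move. This requires checking that $H$ has a dominating set of size $b$ that avoids any single vertex Staller might claim, and moreover that the two-level branching is chosen so the relevant count is exactly $f(b)$ rather than something smaller — this is where $\lfloor b/2\rfloor$ and $\lceil b/2\rceil$ enter, balancing the two levels of branching against the single bias $b$. Second, I would assemble $m := \lceil k/f(b)\rceil$ disjoint copies of the gadget and connect them into a tree by a sparse ``backbone'' (e.g.\ attaching each gadget center by an edge, or a short pendant path of length $2$, to a common path) in such a way that (a) the connecting structure is stripped away cleanly by the $(\varnothing,b)$-admissible reduction of Lemma~\ref{lem:residue}, leaving exactly the $m$ gadgets as the residue $T_{k,b}\downarrow\mathbf{v}$, and (b) the total number of vertices can be tuned so $v(T_{k,b}) - v(T_{k,b}\downarrow\mathbf{v}) \equiv 0 \pmod{b+1}$; padding with extra pendant $P_2$'s or extra leaf-stars gives the needed flexibility, and one must also double-check $T_{k,b}$ is $b$-good so Lemma~\ref{lem:residue} applies.

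Third, and this is the crux, I would prove $\gamma_{MB}'(\bigsqcup_{i=1}^m H_i, b) = m$ where the $H_i$ are the $m$ gadget copies. The upper bound $\leq m$ is Dominator's strategy: each round she finishes off whichever gadget Staller just touched (using that $\gamma_{MB}'(H,b)=1$ and that within a gadget she always responds locally), spending idle moves on untouched gadgets if Staller plays where she has already won. The lower bound $\geq m$ is the delicate part: I must exhibit a Staller strategy forcing $m$ rounds, i.e.\ show Staller can prevent Dominator from ``making progress'' on two gadgets in one round. The gadget must be engineered so that after Staller's single move in a fresh gadget, \emph{every} dominating set of that gadget disjoint from Staller's vertex has size at least $b$ — so Dominator is forced to spend all $b$ moves of that round inside the one gadget, touching no other. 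Staller plays in a fresh gadget each round until only dominated gadgets remain.

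I expect the main obstacle to be this lower bound: proving that the gadget genuinely forces a full round of Dominator's bias, i.e.\ that there is no clever dominating set of size $<b$ in the perturbed gadget and no way for Dominator to hedge across two gadgets simultaneously. This amounts to a careful case analysis of dominating sets of the two-level star avoiding one forbidden vertex, and checking the arithmetic $f(b) = (\lfloor b/2\rfloor+1)(\lceil b/2\rceil+1)$ is exactly the threshold: a gadget built to require $f(b)$ in ``size'' forces exactly one round, and bundling fewer than $f(b)$ worth of structure per round is impossible for Staller while bundling more is impossible for Dominator. A secondary technical nuisance will be arranging the divisibility-by-$(b+1)$ padding and the $b$-goodness simultaneously without disturbing the residue, which should be routine once the gadget is fixed but requires care in the bookkeeping.
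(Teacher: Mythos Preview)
Your plan has two genuine gaps.

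First, your use of Lemma~\ref{lem:residue} is backwards. That lemma says
\[
\gamma_{MB}'(T,b)\;=\;s+\gamma_{MB}'(T_{\downarrow\mathbf v},b)
\]
when $\mathbf v$ has length $s$; the stripped part \emph{adds} $s$ rounds, it does not come for free. So if you glue the gadgets together by structure removable via an admissible sequence of length $s>0$, the resulting tree has $\gamma_{MB}'=s+m$, not $m$. There is no way to use the lemma to connect $m$ gadgets into a single tree without inflating $\gamma_{MB}'$; you would have to analyse the connected tree directly.

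Second, your concrete gadget does not have $\gamma_{MB}'(H,b)=1$. If Staller opens on a middle vertex $u$, each of its $\lceil b/2\rceil+1$ pendant leaves has $u$ as its unique neighbour, so Dominator must claim all of them; together with the other $\lfloor b/2\rfloor$ middle vertices (needed to cover their leaves) this already costs $b+1$ moves. Trimming to $\lceil b/2\rceil$ leaves per middle vertex fixes this, but then $v(H)=f(b)+1$, and you still have no way to assemble $m$ copies into a \emph{tree on exactly $k$ vertices} --- which is what the claim is really asserting, as you can see from how $T_{n-s(b+1),b}$ is plugged in immediately afterwards --- without running a fresh game analysis on the connected object.

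The paper avoids both problems by a direct construction with no appeal to Lemma~\ref{lem:residue}: $T_{k,b}$ is a caterpillar on exactly $k$ vertices, a path $z_1z_2\cdots z_q$ where each $z_i$ (for $i<q$) carries $x:=\lceil b/2\rceil$ pendant leaves and $z_q$ carries the remainder. Both bounds on $\gamma_{MB}'$ are then proved by explicit strategies. Whenever Staller touches a fresh star, Dominator claims its at most $x$ remaining vertices and spends her leftover $b-x$ moves on free centres $z_j$, thus dominating $b-x+1$ stars, i.e.\ $(b-x+1)(x+1)=f(b)$ vertices, per round. Conversely, Staller keeps claiming fresh centres $z_i$; each such move forces Dominator to spend $x$ of her $b$ moves on the leaves of that star, so she touches at most $b-x+1$ stars per round, and the inequality $(\lceil k/f(b)\rceil-1)(b-x+1)<q$ leaves an undominated star after $\lceil k/f(b)\rceil-1$ rounds.
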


	\begin{figure}
		\centering
		\includegraphics[scale=0.7]{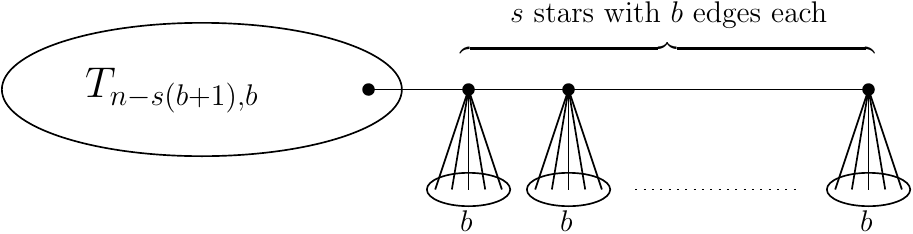}
		\caption{A tree $T(s)$}
		\label{fig:tree.construction}
	\end{figure}

Before we prove this claim, let us first explain how the statement (b) can be concluded.
Having $n$ and $b$ fixed, 
for any $0\leq s\leq \lfloor \frac{n}{b+1} \rfloor$
consider any tree $T(s)$ on $n$ vertices for which there is an $(\varnothing,b)$-admissible sequence $\mathbf{v}=(v_1,\ldots,v_s)$
with $T_{\downarrow \mathbf{v}} = T_{n-s(b+1),b}$;
an example is illustrated in Figure~\ref{fig:tree.construction}.
Then, using Lemma~\ref{lem:residue} 
and the claim above we obtain
$$
\gamma_{MB}'(T(S),b)=
s + \gamma_{MB}'(T_{n-s(b+1),b},b)=
s + \left\lceil \frac{n-s(b+1)}{f(b)} \right\rceil .
$$
Now for the expression $F(s):=s + \left\lceil \frac{n-s(b+1)}{f(b)} \right\rceil$ observe that
$F(0)=\lceil \frac{n}{f(b)}\rceil$,
and
$F( \lfloor \frac{n}{b+1} \rfloor ) = \lceil \frac{n}{b+1} \rceil$,
and
$F(j)\leq F(j+1)\leq F(j)+1$
for every $0\leq j\leq \lfloor \frac{n}{b+1} \rfloor$, since
$\frac{b+1}{f(b)}\leq 1$.
Hence, taking for $s$ all integer values
in the range $[0,\lfloor \frac{n}{b+1} \rfloor]$,
we see that $F(s)$ takes all integer values in the range $[ \lceil \frac{n}{f(b)} \rceil, \lceil \frac{n}{b+1} \rceil ]$, proving part (b) of the theorem.

\bigskip

It remains to prove Claim~\ref{claim:trees.bound.reduction}, which will be done in the following.

For short, let $x:=\lceil \frac{b}{2} \rceil$
and $q:= \lceil \frac{k}{x+1} \rceil$.
We set $r:=k-(q-1)(x+1)$ and note that
$1\leq r\leq x+1$.
Moreover, for later arguments note that
\begin{equation}\label{eq:trees.helpful.estimate}
\left( \left\lceil \frac{k}{f(b)} \right\rceil-1\right)
\cdot \left( b-x+1\right)
<
 \frac{k}{f(b)} 
\cdot \left( \left\lfloor \frac{b}{2} \right\rfloor + 1\right)
=
\frac{k}{x+1}.
\end{equation}

In order to construct the tree $T_{k,b}$,
we proceed as follows: we first take $q$ disjoint stars
$S_1,S_2,\ldots,S_{q}$ with centers $z_1,z_2,\ldots,z_q$
such that $e(S_i)=x$ for every $i\in [q-1]$
and $e(S_q)=r-1$, and then we we add the edges $z_iz_{i+1}$ with $i\in [q-1]$;
an illustration is given in Figure~\ref{fig:tree.construction2}.
Note that the resulting graph has $\sum_{i\in [q]} v(S_i)=k$
vertices.

	\begin{figure}
		\centering
		\includegraphics[scale=0.7,page=2]{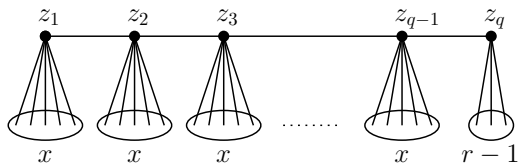}
		\caption{Tree $T_{k,b}$}
		\label{fig:tree.construction2}
	\end{figure}

We claim that $\gamma_{MB}'(T_{k,b}) = \lceil \frac{k}{f(b)} \rceil$. For this we distinguish the cases $r\neq 1$ and $r=1$.

\medskip

\underline{Case 1:} Assume that $r\neq 1$.

We first prove that 
$\gamma_{MB}'(T_{k,b}) \leq \lceil \frac{k}{f(b)} \rceil$.
To do so, let Dominator play as follows:
If Staller in his previous move claimed a vertex of a star $S_i$, $i\in [q]$, which does not contain any vertices claimed by Dominator, then Dominator first claims all remaining vertices
of $S_i$ (in total at most $x$ vertices),
and then she claims as many centers $z_i$ as possible
such that the corresponding stars $S_i$ do not contain a vertex of Staller, preferring $i<q$. 
This way, in each round Dominator makes sure to dominate all vertices of the just mentioned stars, and the number of these vertices is at least
$(b-x+1)(x+1) = f(b)$, except for maybe the last round. It follows that Dominator
claims a dominating set of $T_{k,b}$ within at most 
$\lceil \frac{k}{f(b)} \rceil$ rounds.

Next, we prove
$\gamma_{MB}'(T_{k,b}) \geq \lceil \frac{k}{f(b)} \rceil$.
To do so, consider the following strategy for Staller.
In the first $\lceil \frac{k}{f(b)} \rceil-1$ rounds,
Staller distinguishes three cases:
(1) If Staller can claim a vertex in such a way that he can complete the closed neighbourhood of a vertex, then he does so and wins the game. (2) If such a vertex does not exist, Staller claims a free star center $z_i$ with $i<q$. (3) Otherwise, if such a center also does not exist, Staller forfeits the game (we will see in a moment that this case cannot happen).
In round $\lceil \frac{k}{f(b)} \rceil$ Staller 
again plays according to (1) if possible, and otherwise
he plays arbitrarily.

\smallskip

In order to show that Staller succeeds
in making the game last at least $\lceil \frac{k}{f(b)} \rceil$
rounds, assume first that for some $j\leq \lceil \frac{k}{f(b)} \rceil-2$ Staller could follow his strategy and so far there was no option for following case (1). We now explain why then Staller can play his $(j+1)$-st move according to (2):
Let $z_{i_1},z_{i_2},\ldots,z_{i_{j}}$ be the star centers that Staller claimed in the first $j$ rounds.
Since (1) did not happen yet, Dominator must have claimed
all the remaining vertices of the corresponding stars
$S_{i_1},S_{i_2},\ldots,S_{i_{j}}$,
in total $jx$ vertices. As so far Dominator has claimed
$jb$ vertices in total, 
next to $S_{i_1},S_{i_2},\ldots,S_{i_{j}}$
she can have touched at most $j(b-x)$ further stars,
leading to a total of at most 
\begin{equation}\label{eq:trees.estimate1}
j(b-x+1) 
\leq 
\left( \left\lceil \frac{k}{f(b)} \right\rceil-2\right)
\cdot \left( b-x+1\right)
\stackrel{\eqref{eq:trees.helpful.estimate}}{<} \frac{k}{x+1} 
- (b-x+1)
\leq q - 1
\end{equation}
stars that may contain vertices of Dominator.
In particular, there must be a free center $z_i$ with $i<q$
so that Staller can follow (2). Thus, Staller can always follow
the proposed strategy without forfeiting the game.

Now, after at the end of round $\left \lceil \frac{k}{f(b)} \right \rceil - 1$, the same argument as above gives that Dominator can have touched at most
\begin{equation}\label{eq:trees.estimate2}
\left( \left\lceil \frac{k}{f(b)} \right\rceil-1\right)
\cdot \left( b-x+1\right)
\stackrel{\eqref{eq:trees.helpful.estimate}}{<} \frac{k}{x+1} 
\leq q
\end{equation}
stars so far, and hence there must be at least one star
where none of its leaves is yet dominated.
So, Dominator needs to play at least the round 
$\left\lceil \frac{k}{f(b)} \right\rceil$
before she can win.

\smallskip

\underline{Case 2:}  Assume that $r = 1$. The proof is essentially the same as in Case 1, but since $S_q$ is a single vertex,
Staller can force Dominator to claim $z_q$ by claiming the center $z_{q-1}$. In other words, we can instead from now on redefine our construction so that $z_q$ belongs to $S_{q-1}$, and hence
$e(S_{q-1})=x+1$ and $S_q$ does not exist.

For proving that 
$\gamma_{MB}'(T_{k,b}) \leq \lceil \frac{k}{f(b)} \rceil$,
let Dominator play as in Case 1.
In all rounds (except maybe the last round)
in which Staller does not claim $z_{q-1}$,
Dominator can again dominate $b-x+1$ stars completely,
and hence dominates $(b-x+1)(x+1)=f(b)$ vertices of these stars.
In a round (except maybe the last round)
in which Staller claims $z_{q-1}$,
Dominator must claim the $x+1$ leaves of $S_{q-1}$
and can then care about further $b-x-1$ stars by claiming their centers; the dominated vertices in these considered stars are then
$(b-x-1)(x+1)+(x+2) = f(b)-x$. 
Assume that Dominator would not win the game within the first 
$\lceil\frac{k}{f(b)}\rceil$ rounds, then with the same argument as above
she would still have dominated at least
$$
\left( \left\lceil\frac{k}{f(b)}\right\rceil - 1 \right)f(b) + 1\cdot (f(b)-x)
=
\left\lceil\frac{k}{f(b)}\right\rceil \cdot f(b) - x
$$
vertices by then. Now, note that by the assumption of the case we have
$k\equiv 1$ (mod $x+1$) and thus, since $f(b)$ is a multiple of $x+1$, we get $k\equiv t(x+1)+1$ (mod $f(b)$) for some integer $t$.
In particular, the number of dominated vertices can be further estimated from below by
$$
\left\lceil\frac{k}{f(b)}\right\rceil \cdot f(b) - x
\geq
\frac{k+x}{f(b)}\cdot f(b) - x = k,
$$
in contradiction to the assumption that Dominator has not won already.

\smallskip

For proving that 
$\gamma_{MB}'(T_{k,b}) \geq \lceil \frac{k}{f(b)} \rceil$,
let Staller also play as in Case 1,
except that in the first round he claims the center $z_{q-1}$
of the biggest star $S_{q-1}$.
Due to the additional leaf that Dominator needs to claim
in the first round when compared with Case 1,
Dominator looses one move (one vertex) for claiming a center of another star. Because of this, all the estimates in~\eqref{eq:trees.estimate1} and~\eqref{eq:trees.estimate2}
can be changed by adding the summand $-1$.
In particular, in~\eqref{eq:trees.estimate2} we then get that
Dominator has touched less than $q-1$ stars within the first
$\lceil \frac{k}{f(b)} \rceil - 1$ rounds, and hence at least one more round is needed for her to win the game.
\end{proof}

\begin{remark}\label{remark:D.starts}
For the case when Dominator starts the game, $\gamma_{MB}(T,b)$ can take every integer value $t$ between $1$ and $\frac{n}{b+1}$. For this consider the tree $T$ obtained as follows. Take $t$ disjoint stars $S_1,\ldots,S_{t}$ such that $e(S_i)=b$ for every $i<t$ and $e(S_t)=n-(b+1)t$, and add a path through the centers of these stars; then $\gamma_{MB}(T,b)=t$. We leave the details to the reader.
\end{remark}

\smallskip

\subsection{Number of dominated vertices}
Finally, we prove Theorem~\ref{thm:trees.fraction}.

\begin{proof}[Proof of Theorem~\ref{thm:trees.fraction}]
For the first part consider $\mathcal{F} = \{ N[v]:~ v\in V(G)\}$
and note that 
$$
\sum_{F\in \mathcal{F}} (1+b)^{-|F|}
\leq \frac{v(G)}{(b+1)^2} .
$$
By Theorem~\ref{thm:beck.criterion},
Dominator (playing as Breaker) can make sure that Staller claims at most
$\frac{v(G)}{(b+1)^2}$ sets of $\mathcal{F}$ completely.
This in turn implies the first part of the theorem.

For the sharpness, let $n\in\mathbb{N}$ and consider
a graph on $n$ vertices obtained as follows:
take $b+1$ stars each of size $\lfloor \frac{n}{b+1} \rceil$,
and add a path through the centers of these stars.
After the first move of Dominator, there is at least one star
which was not touched. Staller first claims the center,
and afterwards roughly a $\frac{1}{b+1}$-fraction of its leaves.
These leaves, the number of which is roughly $\frac{n}{(b+1)^2}$,
are not dominated by Dominator.
\end{proof}

\bigskip


\section{Minimum degree condition}\label{sec:mindeg}

In this section, we will make use of the following simple observation which can also be found in~\cite{beck1982remarks}.

\begin{lemma}\label{lemma:binary.tree}
Let $b\in\mathbb{N}$, 
and let $T$ be a perfect $(b+1)$-ary tree,
and let  $\calF = \calF(T)$ be the hypergraph 
on vertex set $V(T)$ where
the vertices of any root-leaf path of $T$ 
form a hyperedge of $\calF$. Then, Maker
wins the $(1:b)$ Maker-Breaker game on $(X,\calF)$,
provided she is the first player.
\end{lemma}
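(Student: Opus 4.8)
The plan is to give an explicit pairing strategy for Maker, played on the tree $T$ level by level from the root downward. Let $d$ be the depth of $T$, so the root-leaf paths have $d+1$ vertices each. First I would have Maker claim the root in her opening move; this occupies the top vertex of every root-leaf hyperedge simultaneously. After that, I maintain the invariant that after each of Maker's moves she owns a connected subtree $S$ hanging from the root, such that $S$ contains at least one vertex at every level it reaches, and such that from the ``bottom'' vertex of $S$ that lies on the deepest reached level there is still a free continuation downward. More concretely, whenever Breaker claims a single vertex $v$, Maker responds by descending one level along a still-available branch: she picks a vertex $w$ that Maker already owns at the deepest level reached so far, looks at its $b+1$ children, and claims one that is still free. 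Since Breaker has only claimed one new vertex since Maker's last move and $w$ has $b+1 \ge 2$ children, at least $b$ of them are free, so such a $w$'s child exists — in fact we must be slightly more careful and track that among the $b+1$ children of the current frontier vertex, Breaker can have blocked at most...

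The cleaner way to run this is the standard potential/counting argument from Beck: I would instead argue that Maker, moving first in the $(1:b)$ game, can always keep ``one full branch alive.'' Formalize this by a strategy-stealing-free direct recursion: Maker claims the root; this splits $T$ into $b+1$ perfect $(b+1)$-ary subtrees of depth $d-1$ rooted at the root's children. Breaker then gets to claim $b$ vertices before Maker moves again. These $b$ vertices can kill at most $b$ of the $b+1$ top-level subtrees in the sense of lying in them, so at least one child subtree $T'$ of the root is still completely free. Maker now claims the root of $T'$ and recurses: within $T'$ (a perfect $(b+1)$-ary tree of depth $d-1$, entirely unclaimed) she is again effectively the first player in a $(1:b)$ game, except Breaker may already have some ``credit'' — but crucially each round Breaker plays $b$ new vertices and Maker descends exactly one level, so by an easy induction on $d$ Maker reaches a leaf, completing a root-leaf path, which is a hyperedge of $\calF$.

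To make the induction bookkeeping honest I would state and prove the following by induction on the depth $d$: \emph{in the $(1:b)$ game on a perfect $(b+1)$-ary tree of depth $d$, if it is Maker's turn and no vertex of the tree has been claimed, Maker wins (occupies a root-leaf path).} Base case $d=0$: Maker claims the single vertex. Inductive step: Maker claims the root; the $b+1$ child subtrees are disjoint, unclaimed, each a perfect $(b+1)$-ary tree of depth $d-1$; Breaker's next $b$ moves touch at most $b$ of them, so one child subtree $T'$ remains entirely unclaimed and it becomes Maker's turn; apply the induction hypothesis to $T'$ (treating moves Breaker makes outside $T'$ as irrelevant — a wasted-move argument — so Maker is effectively first player on the unclaimed tree $T'$), obtaining a root($T'$)-leaf path, which together with the root of $T$ forms a root-leaf path of $T$.

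The main obstacle is the usual subtlety in ``Maker can ignore Breaker's moves outside the relevant subtree'': one must check that whenever Maker would like to play in $T'$, it is indeed her turn and $T'$ is still fully unclaimed at that moment, and that Breaker spending a move outside $T'$ only helps Maker. This is a standard monotonicity fact for Maker-Breaker games (extra free vertices for Maker / extra occupied vertices for Breaker outside the subgame never hurt Maker), so once stated it is routine; the rest of the argument is the clean $(b+1)$-way branching count above.
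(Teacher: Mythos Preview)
The paper does not supply its own proof of this lemma; it is stated as a known observation attributed to Beck~\cite{beck1982remarks}. Your final formulation --- the induction on depth $d$, with Maker claiming the root, Breaker's $b$ moves touching at most $b$ of the $b+1$ child subtrees, and Maker recursing into an untouched subtree via the wasted-move monotonicity principle --- is exactly the standard argument and is correct. The only comment is presentational: the first two paragraphs of the proposal grope toward the right idea but are not needed once you state the clean inductive claim, so in a final write-up you could drop them and begin directly with the induction statement.
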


Having this result and Beck's Criterion in our hand, we can prove our first minimum degree condition (Theorem~\ref{thm:min.degree.bound}) for Maker-Breaker domination games.

\begin{proof}[Proof of Theorem~\ref{thm:min.degree.bound}]
For the first part of the theorem, let $G$ be any graph on $n$ vertices with $\delta(G) > \log_{b+1}(n) - 1$. 
Consider $\mathcal{F} = \{ N[v]:~ v\in V(G)\}$
and note that 
$$
\sum_{F\in \mathcal{F}} (1+b)^{-|F|}
\leq n (1+b)^{-(\delta(G)+1)} < n (1+b)^{-\log_{b+1}(n)} = 1\, .
$$
Hence, by Theorem~\ref{thm:beck.criterion}
it follows that Dominator (playing as Breaker) can claim an element in each set of $\mathcal{F}$. That is, Dominator claims a vertex in each closed neighbourhood, so that all of her vertices form a dominating set.

For the second part, let $T_k$ be a perfect $(b+1)$-ary tree 
with $k+1$ levels (the root is the first level), 
and let $G$ be the graph obtained from taking $(b+1)$ disjoints
copies of $T_k$ and adding additional edges between every vertex
and all its descendants.
Then $n:=v(G)= (b+1)\sum_{i=0}^k (b+1)^i < (b+1)^{k+2}$ and
$\delta(G) = k > \log_{b+1}(n)-2$.
We claim that Dominator loses the $(b:1)$ Maker-Breaker domination game on $G$.
For this, Staller plays as follows:
After Dominator's first move, where she claims $b$ vertices,
there will be at least one copy of $T_k$ untouched.
On this copy, Staller follows Maker's strategy guaranteed by Lemma~\ref{lemma:binary.tree}, so that Staller claims all the vertices of a root-leaf path. The leaf of this root-leaf path cannot be dominated by Dominator.
\end{proof}

Next we prove Theorem~\ref{thm:min.degree.bound2}
by combining a simple probabilistic argument 
and Beck's Criterion.

\begin{proof}[Proof of Theorem~\ref{thm:min.degree.bound2}]
At first observe that there exists a
subset $A\subset V(G)$
such that
\begin{equation}\label{eq:randomA}
|A|\leq \frac{10n\ln(n)}{\delta(G)}
~~ \text{and} ~~ 
|N(v)\cap A| > 2\ln(n) 
~~ \text{for every} 
~ v\in V(G).
\end{equation}
The existence can be proven with a standard probabilistic argument.
Indeed, let $A\subset V(G)$ be a random subset where
every vertex of $V(G)$ is contained in $A$ independently
with probability $p=\frac{9\ln(n)}{\delta(G)}$.
Then $\mathbb{E}(|A|)=np=\frac{9n\ln(n)}{\delta(G)}$ and, using a Chernoff inequality (Lemma~\ref{lem:Chernoff}), we have
$$
\operatorname{Prob}\left[ |A|> \frac{10n\ln(n)}{\delta(G)}\right]
\leq
\exp\left( - \frac{3n\ln(n)}{\delta(G)} \right)
\leq
\exp\left( - 3\ln(n) \right) = n^{-3}.
$$
Moreover, for every vertex $v$ we have
$\mathbb{E}(|N(v)\cap A|)=|N(v)|p \geq 9\ln(n)$ and
thus, using a Chernoff inequality (Lemma~\ref{lem:Chernoff}), we conclude
$$
\operatorname{Prob}\left[ |N(v)\cap A| \leq 2 \ln(n) \right]
\leq
\exp\left( - 2\ln(n) \right) = n^{-2}.
$$
Thus, by taking a union bound, we see that
\eqref{eq:randomA} fails with probability at most
$n \cdot n^{-2} + n^{-3} <1$.

\medskip

Now, fix such a set $A$ and consider
$\mathcal{F} := \{N(v)\cap A:~ v\in V(G)\}$.
Then
$$
\sum_{F\in \mathcal{F}} (1+b)^{-|F|}
\leq n (1+b)^{-2\ln(n)} < 1
$$
for every $b\geq 1$ and $n\geq 2$.
Thus, using Beck's Criterion (Theorem~\ref{thm:beck.criterion}), we see that by playing on $A$
only, Dominator (playing as Breaker) can get a vertex in each of the sets $A\cap N(v)$. As she has an element in each $N(v)$ then, she has claimed a dominating set.
Moreover, for playing on $A$ with bias $b$, Dominator only needs at most $|A|/b$ rounds, giving the desired bound.
\end{proof}

\bigskip


\section{Concluding remarks and open problems}\label{sec:concluding}

\textbf{Dominating trees.}
In part (a) of Theorem~\ref{thm:trees.bounds} we provide
general bounds on $\gamma_{MB}'(T,b)$
for any tree $T$, and with part (b) we  show that our general lower bound is optimal up to a multiplicative factor of at most $4$. Naturally this leads to the question of finding a tight lower bound and a characterization of extremal trees.

\begin{problem}
Given integers $b$ and $n$.
What is smallest possible value
$\gamma_{MB}'(T,b)$
among all trees $T$ on $n$ vertices?
\end{problem}

\begin{problem}
Given integers $b$ and $n$.
Characterize all trees $T$ on $n$ vertices,
which minimize $\gamma_{MB}'(T,b)$.
\end{problem}

While part (b) of Theorem~\ref{thm:trees.bounds} also ensures the existence of a tree $T$
such that $\gamma'_{MB}(T,b)=t$ whenever 
$\lceil \frac{n}{f(b)} \rceil \leq t \leq \lceil \frac{n}{b+1} \rceil$ with
$f(b):=(\lfloor \frac{b}{2} \rfloor + 1)(\lceil \frac{b}{2} \rceil + 1)$ holds, we do not a have a general approach
to determine $\gamma'_{MB}(T,b)$ or 
$\gamma_{MB}(T,b)$ for any given tree $T$.
In particular, we ask the following.

\begin{question}
Is there a polynomial time algorithm that finds
$\gamma'_{MB}(T,b)$ or $\gamma_{MB}(T,b)$ for any given tree $T$ and positive integer $b$?
\end{question}

\smallskip

\textbf{Dominating random graphs.}
In order to obtain a better understanding of biased
Maker-Breaker domination games, it also seems natural to
study such games when played on randomly generated graphs.

For instance, let $G\sim \mathcal{G}_2(n,r)$ be a random geometric graph, which is obtained by placing $n$ points uniformly at random into the square $[0,1]^2$ and by adding an edge between any two points if their Euclidean distance is at most $r$. Then, with a simple pairing strategy
together with an argument similar to the proof of
Theorem~2 in~\cite{bonato2015domination},
it can be shown that
$$\gamma_{MB}(G,b), \gamma'_{MB}(G,b) = \left( \frac{2\sqrt{3}}{9b} + o(1) \right) r^{-2}$$
holds a.a.s.
Moreover, if we instead consider the Erd\H{o}s-R\'enyi random graph model $G_{n,p}$ with constant $p$,
a simple randomized strategy for Dominator can be
used in order to show that a.a.s.
$$\gamma_{MB}(G,b), \gamma'_{MB}(G,b) = (1+o(1)) \frac{1}{b}\log_{1/(1-p)}(n)$$
holds. 
That is, in both of the above cases the number of elements that Dominator a.a.s.~needs to claim until winning the game on $G$ is asymptotically equal to the domination number $\gamma(G)$. We wonder whether even equality holds.

\begin{problem}
 Let an integer $b\geq 1$ be given, and let
 $G\sim \mathcal{G}_2(n,r)$ for some $r\in (0,1)$.
 Is it true that 
 $
 \gamma_{MB}(G,b) = \gamma'_{MB}(G,b) = \left\lceil\frac{\gamma(G)}{b} \right\rceil
 $
 holds a.a.s.?
\end{problem}

\begin{problem}
 Let an integer $b\geq 1$ be given, and
 let $G\sim G_{n,p}$ for some $p\in (0,1)$.
 Is it true that 
 $
 \gamma_{MB}(G,b) = \gamma'_{MB}(G,b) = \left\lceil\frac{\gamma(G)}{b} \right\rceil
 $
 holds a.a.s.?
\end{problem}

Note that if the answer to the last question is yes,
then $\gamma_{MB}(G,b)$ and $\gamma'_{MB}(G,b)$
would be concentrated on two values if $G\sim G_{n,p}$, see~\cite{glebov2015concentration,wieland2001domination}.
But even if the answer is no, it would be interesting to
analyse whether concentration results
similar to those in~\cite{glebov2015concentration,wieland2001domination}
can be proven.

\medskip

\textbf{Variants.} For all questions discussed in this paper, it also seems natural to consider variants of Maker-Breaker domination games, for example where Dominator's goal is to claim a total domination set or a $k$-dominating set for any positive integer $k$.

\smallskip

\section*{Acknowledgement}
Parts of this paper were obtained in the Bachelor thesis of the first author at the Hamburg University of Technology. The first author wants to thank Anusch Taraz and his research group
for the supervision.

\bibliographystyle{amsplain}
\bibliography{references}

\end{document}